\numberwithin{equation}{section}
\theoremstyle{plain}
\newtheorem{thm}{Theorem}[section]
\newtheorem{lem}[thm]{Lemma}
\theoremstyle{remark}
\newtheorem{rem}[thm]{Remark}
\let\op\mathcal
\let\set\mathscr
\newcommand*{\hairspace}{\kern 0.08333em}
\newcommand*{\dual}{+}
\newcommand*{\dualref}{\textsuperscript{+}}
\renewcommand{\Re}{\operatorname{Re}}
\renewcommand{\Im}{\operatorname{Im}}
\newcommand*{\cell}{Q}
\newcommand*{\cellradius}{r_\cell}
\newcommand*{\dd}{\mathop{}\!d}
\newcommand*{\findiff}{\varDelta}
\newcommand*{\meet}{{\wedge}}
\newcommand*{\wc}{{\mkern 2mu\cdot\mkern 2mu}}
\DeclareMathOperator{\diam}{diam}
\DeclareMathOperator{\dist}{dist}
\DeclareMathOperator{\Div}{div}
\DeclareMathOperator{\spec}{spec}
\providecommand{\rbr}[1]{(#1)}
\providecommand{\bigrbr}[1]{\bigl(#1\bigr)}
\providecommand{\Bigrbr}[1]{\Bigl(#1\Bigr)}
\providecommand{\sbr}[1]{[#1]}
\providecommand{\cbr}[1]{\{#1\}}
\providecommand{\bigcbr}[1]{\bigl\{#1\bigr\}}
\providecommand{\abr}[1]{\langle#1\rangle}
\providecommand{\abs}[1]{\lvert#1\rvert}
\providecommand{\bigabs}[1]{\bigl\lvert#1\bigr\rvert}
\providecommand{\norm}[1]{\lVert#1\rVert}
\let\seminorm\sbr
\newcommand*{\B}{\mathbf{B}}
\newcommand*{\C}{\mathbb{C}}
\newcommand*{\N}{\mathbb{N}}
\newcommand*{\R}{\mathbb{R}}
\newcommand*{\Z}{\mathbb{Z}}
\begin{document}

\title[Homogenization for locally periodic elliptic operators]
      {Homogenization for non-self-adjoint\\ locally periodic elliptic operators}
\author{Nikita N. Senik}
\address{Saint~Petersburg State University, Universitetskaya nab.~7/9, Saint~Petersburg 199034, Russia}
\email{\protect\raisebox{-1.6pt}{\protect\includegraphics{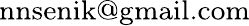}}}
\thanks{The~author was partially funded by Young Russian Mathematics award,
Rokhlin grant and RFBR grant 16-01-00087.}

\subjclass[2010]{Primary 35B27; Secondary 35J15, 35J47}

\keywords{homogenization, operator error estimates, locally periodic operators,
          effective operator, corrector}

\begin{abstract}
We study the homogenization problem for matrix strongly
elliptic operators on $L_2(\mathbb R^d)^n$ of the
form~$\mathcal A^\varepsilon=-\operatorname{div}A(x,x/\varepsilon)\nabla$.
The~function~$A$ is Lipschitz in the first variable and periodic
in the second. We do not require that $A^*=A$, so $\mathcal A^\varepsilon$
need not  be self-adjoint. In~this paper,  we  provide, for
small~$\varepsilon$, two terms in the uniform approximation for
$(\mathcal A^\varepsilon-\mu)^{-1}$  and a first term in the
uniform approximation for $\nabla(\mathcal A^\varepsilon-\mu)^{-1}$.
Primary attention is paid to proving sharp-order bounds on the errors
of the approximations.
\end{abstract}

\maketitle

\section{Introduction}

 Homogenization  dates back to the late 1960s, and for more
than fifty years it  has  become a well-established theory. In~the~simplest
case, homogenization deals with asymptotic properties of  solutions
to differential equations with oscillating coefficients.  Given a
periodic (with period~$1$ in each variable) uniformly bounded and
uniformly positive definite function~$A\colon\R^{d}\to\C^{d\times d}$,
consider the differential equation
\begin{equation}
-\Div A\rbr{\varepsilon^{-1}x}\hairspace\nabla u_{\varepsilon}-\mu u_{\varepsilon}=f,\label{eq: Intro | Heterogeneous equation}
\end{equation}
where $\varepsilon>0$, $\mu\in\C\setminus\R_{+}$ and~$f\in L_{2}\rbr{\R^{d}}$.
The~coefficients of the equation are $\varepsilon$\nobreakdash-periodic
and hence rapidly oscillate if $\varepsilon$ is small. In~homogenization
theory one is interested in studying the asymptotic behavior of $u_{\varepsilon}$
as~$\varepsilon$ becomes smaller. It~is a basic fact that, after
passing to a subsequence if necessary, $u_{\varepsilon}$ converges
 to the solution~$u_{0}$ of the differential equation
\begin{equation}
-\Div A^{0}\nabla u_{0}-\mu u_{0}=f\label{eq: Intro | Homogeneous equation}
\end{equation}
with constant~$A^{0}$. Since, in applications, the elliptic operator
on the left side of (\ref{eq: Intro | Heterogeneous equation}) usually
describes a physical process in a highly heterogeneous medium, this
means that, in certain aspects, the process evolves very similar to
that in a homogeneous medium.

It is a basic fact about homogenization theory that $u_{\varepsilon}$
converges to $u_{0}$  in~$L_{2}\rbr{\R^{d}}$; we refer the
reader to \cite{BLP:1978}, \cite{BP:1984} or~\cite{ZhKO:1993}
for the details. Stated differently, the resolvent of $-\Div A\rbr{\varepsilon^{-1}x}\hairspace\nabla$
converges in the strong operator topology to the  resolvent of~$-\Div A^{0}\nabla$.
In~\cite{BSu:2001} (see~also~\cite{BSu:2003}), Birman and~Suslina
proved that, in fact, the resolvent converges in norm. Moreover,
they  found a sharp-order bound on the rate of convergence. Since
that time there have been a number of interesting further results
in this direction~\textendash{} see \cite{Gri:2004}, \cite{Gri:2006},
\cite{Zh:2005}, \cite{ZhPas:2005}, \cite{Bor:2008}, \cite{KLS:2012},
\cite{Su:2013-1}, \cite{Su:2013-2}, \cite{ChC:2016} and~\cite{ZhPas:2016},
to name  a few.

Here we focus on a more general problem than the periodic one in~(\ref{eq: Intro | Heterogeneous equation}).
Let $A=\cbr{A_{kl}}$ with $A_{kl}\colon\R^{d}\times\R^{d}\to\C^{n\times n}$
being uniformly bounded functions that are Lipschitz in the first
variable and periodic in the second (see~Section~\ref{sec: Original operator}
for a precise definition). Consider the operator~$\op A^{\varepsilon}$
on the complex space~$L_{2}\rbr{\R^{d}}^{n}$ given~by
\[
\op A^{\varepsilon}=-\Div A\rbr{x,\varepsilon^{-1}x}\hairspace\nabla=-\sum_{k,l=1}^{d}\partial_{k}A_{kl}\rbr{x,\varepsilon^{-1}x}\hairspace\partial_{l}.
\]
The~coefficients  now depend not only on the ``fast'' variable,~$\varepsilon^{-1}x$,
but also on the ``slow'' one,~$x$. Assume that, for all $\varepsilon$
in some neighborhood of~$0$, the operator~$\op A^{\varepsilon}$
is coercive and furthermore the constants in the coercivity bound
are independent of~$\varepsilon$. Then $\op A^{\varepsilon}$ is
strongly elliptic for such $\varepsilon$ and there is a sector containing
the spectrum of~$\op A^{\varepsilon}$. In~this paper, we will
obtain approximations for $\rbr{\mathcal{A}^{\varepsilon}-\mu}^{-1}$
and~$\nabla\rbr{\mathcal{A}^{\varepsilon}-\mu}^{-1}$ (with~$\mu$
outside the sector) in the operator norm  and prove that
\begin{gather}
\norm{\rbr{\mathcal{A}^{\varepsilon}-\mu}^{-1}-\rbr{\mathcal{A}^{0}-\mu}^{-1}}_{L_{2}\to L_{2}}\le C\varepsilon,\label{est: Intro | Convergence}\\
\norm{\rbr{\mathcal{A}^{\varepsilon}-\mu}^{-1}-\rbr{\mathcal{A}^{0}-\mu}^{-1}-\varepsilon\op C_{\mu}^{\varepsilon}}_{L_{2}\to L_{2}}\le C\varepsilon^{2}\label{est: Intro | Approximation in L2}
\end{gather}
and
\begin{equation}
\norm{\nabla\rbr{\mathcal{A}^{\varepsilon}-\mu}^{-1}-\nabla\rbr{\mathcal{A}^{0}-\mu}^{-1}-\varepsilon\nabla\op K_{\mu}^{\varepsilon}}_{L_{2}\to L_{2}}\le C\varepsilon,\label{est: Intro | Approximation in H1}
\end{equation}
the~estimates being sharp with respect to the order (see Theorems~\ref{thm: Convergence and Approximation with 1st corrector}
and~\ref{thm: Approximation with 2d corrector}). The~effective
operator~$\mathcal{A}^{0}$ is  of the same form as~$\mathcal{A}^{\varepsilon}$,
but its coefficients  depend only on the slow variable. In~contrast,
the correctors~$\op K_{\mu}^{\varepsilon}$ and~$\op C_{\mu}^{\varepsilon}$
involve rapidly oscillating functions as well. The~first of these
plays the role of the traditional corrector and differs from the
latter in that it involves a smoothing operator. The~idea of using
a smoothing to regularize the traditional corrector is due to Griso,
see~\cite{Gri:2002}. The~other corrector has no analogue in classical
theory and was first presented in~\cite{BSu:2005} for purely periodic
operators. Assume for simplicity that $A^{*}=A$. Then $\op C_{\mu}^{\varepsilon}$
has the form
\[
\op C_{\mu}^{\varepsilon}=\rbr{\op K_{\mu}^{\varepsilon}-\op L_{\mu}}-\op M_{\mu}^{\varepsilon}+\rbr{\op K_{\mu}^{\varepsilon}-\op L_{\mu}}^{*}
\]
(see~Section~\ref{sec: Correctors}). What is interesting here
is that an analog of $\op C_{\mu}^{\varepsilon}$ for periodic operators,
while looking similar to this one, does not include the term~$\op M_{\mu}^{\varepsilon}$,
see~\cite{Se:2017-2}. In~fact,  one cannot remove $\op M_{\mu}^{\varepsilon}$
from $\op C_{\mu}^{\varepsilon}$ if the estimate~(\ref{est: Intro | Approximation in L2})
is to remain true, see Remark~\ref{rem: Cannot be removed}  for
examples. So this term is  a special feature of non-periodic problems.

The~results of the present paper extend the author's work~\cite{Se:2017-2}
on periodic elliptic problems, where we studied non-self-adjoint scalar
 operators whose coefficients were periodic in some variables and
Lipschitz in the others. Put differently, the fast and slow variables
were separated in the sense that $A^{\varepsilon}\rbr x=A\rbr{x_{1},\varepsilon^{-1}x_{2}}$,
where~$x=\rbr{x_{1},x_{2}}$.  We proved analogs of the estimates~(\ref{est: Intro | Convergence})\textendash (\ref{est: Intro | Approximation in H1}),
yet the correctors were slightly different, see Remark~\ref{rem: Correctors in periodic case}
below. It~should be pointed out that the operators in \cite{Se:2017-2}
were allowed to involve lower-order terms with quite general coefficients.

Previous results on uniform approximations for  locally periodic
elliptic operators  are due to, on the one hand, Borisov and, on
the other hand, Pastukhova and~Tikhomirov. In~\cite{Bor:2008} Borisov
established the estimates~(\ref{est: Intro | Convergence}) and~(\ref{est: Intro | Approximation in H1})
for certain matrix self-adjoint operators with smooth coefficients.
In~the~paper~\cite{PasT:2007} of Pastukhova and~Tikhomirov, similar
results were proved for scalar self-adjoint operators with rough
coefficients (although their techniques also apply to non-self-adjoint
problems). As~far as I know, the estimate~(\ref{est: Intro | Approximation in L2})
in the locally periodic settings was not obtained even for the
simplest cases.

To~prove the estimates, we develop the ideas of~\cite{Se:2017-2}.
In~the~first step we establish a variant of the resolvent identity
that involves the resolvents of the original and the effective operators
and a corrector (see~Section~\ref{sec: Proof of the main results}).
This combination comes as no surprise, for it is well known 
that the effective operator and a corrector form a  first approximation
to the original operator (see, e.g.,~\cite{BLP:1978} or~\cite{ZhKO:1993}).
When this is done, all the desired estimates will follow at once.
However, we cannot use the same technique as in \cite{Se:2017-2},
so the identity is proved by different means. The~point is that
the technique depends heavily on the smoothing operator that has been
chosen. In~the~case of periodic operators, the smoothing was based
on the Gelfand transform; but it is not as convenient now. To~my
knowledge, no natural smoothing for operators with locally periodic
coefficients is known, so we choose the Steklov smoothing operator,
which is the most simple and has proved to be quite useful; see~\cite{Zh:2005}
and~\cite{ZhPas:2005}, where that smoothing first appeared in the
context of homogenization, as well as \cite{PasT:2007}, \cite{Su:2013-1}
and~\cite{Su:2013-2}. We remark that a very similar smoothing had
been used earlier in~\cite{Gri:2002} and~\cite{Gri:2004} (see
also~\cite{Gri:2006}). Our technique is strongly influenced by
all these works.

I~believe that the same method can be of use for locally periodic
problems on domains with Dirichlet or Neumann boundary conditions
as well.

It~is also worth noting that, once the estimates~(\ref{est: Intro | Convergence})\textendash (\ref{est: Intro | Approximation in H1})
are verified, a limiting argument will give similar results for operators
whose coefficients are H\"{o}lder continuous in the first variable,
see Remark~\ref{rem: Holder coefficients}. These results, together
with the results stated here, have been announced in~\cite{Se:2017-3}.

The~plan of the paper is as follows. Section~\ref{sec: Notation}
contains basic definitions and notation. In~Section~\ref{sec: Original operator}
we introduce the original operator. We  study the effective operator
in Section~\ref{sec: Effective operator} and correctors in Section~\ref{sec: Correctors}.
Section~\ref{sec: Main results} states the main results. Section~\ref{sec: Proof of the main results}
is the  core of the paper, where we first prove the identity and
then complete the proofs.

\section{\label{sec: Notation}Notation}

The~symbol~$\norm{\wc}_{U}$ will stand for the norm on a normed
space~$U$. If $U$ and~$V$ are Banach spaces, then $\B\rbr{U,V}$
is the Banach space of bounded linear operators from $U$ to~$V$.
When~$U=V$, the space~$\B\rbr U=\B\rbr{U,U}$ becomes a Banach
algebra with the identity map~$\op I$. The~norm and the inner
product on $\C^{n}$ are denoted by $\abs{\wc}$ and~$\abr{\wc,\wc}$,
respectively. We shall often identify $\B\rbr{\C^{n},\C^{m}}$
and~$\C^{m\times n}$.

Let $\Sigma$ be a domain in $\R^{d}$ and $U$ a Banach space. The~space~$C^{0,1}\rbr{\bar{\Sigma};U}$
consists of those  uniformly continuous functions~$u\colon\Sigma\to U$
for which
\[
\norm u_{C^{0,1}\rbr{\bar{\Sigma};U}}=\norm u_{C\rbr{\bar{\Sigma};U}}+\seminorm u_{C^{0,1}\rbr{\bar{\Sigma};U}}<\infty,
\]
where  $\norm u_{C\rbr{\bar{\Sigma};U}}=\sup_{x\in\Sigma}\norm{u\rbr x}_{U}$
and
\[
\seminorm u_{C^{0,1}\rbr{\bar{\Sigma};U}}=\sup_{\begin{subarray}{c}
x_{1},x_{2}\in\Sigma,\\
x_{1}\ne x_{2}
\end{subarray}}\frac{\norm{u\rbr{x_{2}}-u\rbr{x_{1}}}_{U}}{\abs{x_{2}-x_{1}}}.
\]
We will use the notation~$\norm{\wc}_{C^{0,1}}$, $\norm{\wc}_{C}$
and~$\seminorm{\wc}_{C^{0,1}}$ as shorthand for $\norm{\wc}_{C^{0,1}\rbr{\bar{\Sigma};U}}$,
$\norm{\wc}_{C\rbr{\bar{\Sigma};U}}$ and~$\seminorm{\wc}_{C^{0,1}\rbr{\bar{\Sigma};U}}$
when the context makes clear which $\Sigma$ and~$U$ are meant.

The~symbol~$L_{p}\rbr{\Sigma;U}$ stands for the $L_{p}$\nobreakdash-space
of strongly measurable functions on $\Sigma$ with values in~$U$.
 In~case~$U=\C^{n}$, we write $\norm{\wc}_{p,\Sigma}$ for the
norm on $L_{p}\rbr{\Sigma}^{n}$ and~$\rbr{\wc,\wc}_{\Sigma}$ for
the inner product on~$L_{2}\rbr{\Sigma}^{n}$. We let $W_{p}^{m}\rbr{\Sigma}^{n}$
denote the usual Sobolev space of $\C^{n}$\nobreakdash-valued functions
on $\Sigma$ and~$\rbr{W_{p}^{m}\rbr{\Sigma}^{n}}^{*}$,  its dual
space under the pairing~$\rbr{\wc,\wc}_{\Sigma}$. If~$C_{c}^{\infty}\rbr{\Sigma}^{n}$
is dense in $W_{p}^{m}\rbr{\Sigma}^{n}$, then $W_{\smash[t]{\cramped{p^{\dual}}}}^{-m}\rbr{\Sigma}^{n}=\rbr{W_{p}^{m}\rbr{\Sigma}^{n}}^{*}$,
where $p^{\dual}$ is the exponent conjugate to~$p$.

Let $\cell$ be the closed cube in $\R^{d}$ with center~$0$
and side length~$1$, sides being parallel to the axes. Then 
$\tilde W_{p}^{m}\rbr{\cell}^{n}$ denotes the completion of $\tilde C^{m}\rbr{\cell}^{n}$
in the $W_{p}^{m}$\nobreakdash-norm. Here $\tilde C^{m}\rbr{\cell}$
is the class of $m$\nobreakdash-times continuously differentiable
functions  on $\cell$ whose periodic extension to $\R^{d}$ enjoys
the same smoothness. Notice that $\tilde L_{p}\rbr{\cell}^{n}$ coincides
with the space of all periodic functions in~$L_{p,\text{loc}}\rbr{\R^{d}}^{n}$.
The~spaces~$\tilde W_{p}^{m}\rbr{\R^{d}\times\cell}^{n}$ and~$\tilde C^{m}\rbr{\R^{d}\times\cell}^{n}$
are defined in a similar fashion. If~$p=2$, we write $H^{m}$ for
$W_{p}^{m}$, $H^{-m}$ for $W_{p}^{-m}$,~etc. The~symbol~$\tilde H_{0}^{m}\rbr{\cell}^{n}$
will stand for the subspace of functions in $\tilde H^{m}\rbr{\cell}^{n}$
with mean value zero. Any~$u\in\tilde H_{0}^{1}\rbr{\cell}^{n}$ satisfies
the Poincar\'{e} inequality
\begin{equation}
\norm u_{2,\cell}\le\rbr{2\pi}^{-1}\norm{Du}_{2,\cell},\label{est: Poincar=0000E9's inequality}
\end{equation}
as~can be seen by using Fourier series. Here and below, $D=-i\nabla$.

We will often use the notation~$\alpha\lesssim\beta$ to mean that
that there is a constant~$C$, depending only on some fixed parameters
(these are listed in Theorems~\ref{thm: Convergence and Approximation with 1st corrector}
and~\ref{thm: Approximation with 2d corrector}), such that~$\alpha\le C\beta$.

\section{\label{sec: Original operator}Original operator}

Let each~$A_{kl}$ be a function in~$C^{0,1}\rbr{\bar{\R}^{d};\tilde L_{\infty}\rbr{\cell}}^{n\times n}$.
Then $A=\cbr{A_{kl}}$ may be thought of as a bounded mapping~$A\colon\R^{d}\times\R^{d}\to\B\rbr{\C^{d\times n}}$
that is Lipschitz in the first variable and periodic in the second.
 As~is~well known, for any function~$u\colon\R^{d}\times\R^{d}\to L_{2}\rbr{\cell}$
satisfying the Carath\'{e}odory condition (i.e., the requirement
of continuity with respect to the first variable and measurability
with respect to the second)  the map $\tau^{\varepsilon}u\colon\R^{d}\to L_{2}\rbr{\cell}$
defined for $x\in\R^{d}$ and~$z\in\cell$~by
\begin{equation}
\tau^{\varepsilon}u\rbr{x,z}=u\rbr{x,\varepsilon^{-1}x,z},\label{def: =0003C4=001D4B}
\end{equation}
is measurable (here~$\varepsilon>0$). Notice that, if $v$ is another
function from $\R^{d}\times\R^{d}$ to $L_{2}\rbr{\cell}$, then
$\tau^{\varepsilon}\rbr{uv}=\rbr{\tau^{\varepsilon}u}\rbr{\tau^{\varepsilon}v}$.
We adopt the notation~$u^{\varepsilon}=\tau^{\varepsilon}u$.

Consider the matrix operator~$\op A^{\varepsilon}\colon H^{1}\rbr{\R^{d}}^{n}\to H^{-1}\rbr{\R^{d}}^{n}$
given~by
\begin{equation}
\op A^{\varepsilon}=D^{*}A^{\varepsilon}D.\label{def: A=001D4B}
\end{equation}
It is easy to see that $\op A^{\varepsilon}$ is bounded, with bound~$C_{\flat}=\norm A_{C}$:
\begin{equation}
\norm{\op A^{\varepsilon}u}_{-1,2,\R^{d}}\le C_{\flat}\norm{Du}_{2,\R^{d}}\label{est: A=001D4B is bounded}
\end{equation}
for~all~$u\in H^{1}\rbr{\R^{d}}^{n}$. Now we impose a condition
that will render $\op A^{\varepsilon}$ elliptic. Namely, we assume
that $\op A^{\varepsilon}$ is coercive uniformly in~$\varepsilon\in\set E$,
where $\set E=(0,\varepsilon_{0}]$ with~$\varepsilon_{0}\in(0,1]$,
that is, there are $c_{A}>0$ and~$C_{A}\ge0$ such that
\begin{equation}
\Re\rbr{A^{\varepsilon}Du,Du}_{\R^{d}}+C_{A}\norm u_{2,\R^{d}}^{2}\ge c_{A}\norm{Du}_{2,\R^{d}}^{2}\label{est: A=001D4B is coercive}
\end{equation}
for~every~$u\in H^{1}\rbr{\R^{d}}^{n}$. It follows that $\op A^{\varepsilon}$
is  $m$\nobreakdash-sectorial with sector
\[
\set S=\bigcbr{z\in\C\colon\abs{\Im z}\le c_{A}^{-1}C_{\flat}\rbr{\Re z+C_{A}}}
\]
independent of~$\varepsilon$. Whenever $\mu\notin\set S$, the
operator~$\op A_{\mu}^{\varepsilon}=\op A^{\varepsilon}-\mu$ is
an isomorphism and hence is invertible; moreover, for any~$f\in H^{-1}\rbr{\R^{d}}^{n}$
we have
\begin{equation}
\norm{\rbr{\op A_{\mu}^{\varepsilon}}^{-1}f}_{1,2,\R^{d}}\lesssim\norm f_{-1,2,\R^{d}}.\label{est: Norm of (A=001D4B-=0003BC)=00207B=0000B9}
\end{equation}

Before proceeding, we make a few remarks about the coercivity condition.
It follows from (\ref{est: A=001D4B is coercive}) (via~Lemma~\ref{lem: Coercivity of D*A(x,=0022C5)D on Q})
that $A$ satisfies the Legendre\textendash Hadamard condition
\begin{equation}
\Re\abr{A\rbr{\wc}\hairspace\xi\otimes\eta,\xi\otimes\eta}\ge c_{A}\abs{\xi}^{2}\abs{\eta}^{2},\qquad\xi\in\R^{d},\eta\in\C^{n},\label{est: Legendre-Hadamard condition}
\end{equation}
so $\op A^{\varepsilon}$ is strongly elliptic for all~$\varepsilon>0$.
The~Legendre\textendash Hadamard condition does not generally imply~(\ref{est: A=001D4B is coercive}).
If~we restrict our attention to the real-valued case, then for
scalar operators the two statements are equivalent. But this is no
longer true  for matrix operators, let alone the complex-valued
case. A~necessary and sufficient algebraic condition on $A$ that
would guarantee~(\ref{est: A=001D4B is coercive}) is not known.

It~is~worthwhile to point out that we have to be able to verify
the coercivity bound for all $\varepsilon$ in some interval~$(0,\varepsilon_{0}]$,
which may be rather difficult. A~sufficient condition not involving~$\varepsilon$
is that the operator~$D^{*}A\rbr{x,\wc}\hairspace D$ is strongly
coercive on $H^{1}\rbr{\R^{d}}^{n}$ and furthermore there is  $c>0$
so that for any~$x\in\R^{d}$ and~$u\in H^{1}\rbr{\R^{d}}^{n}$
\begin{equation}
\Re\rbr{A\rbr{x,\wc}\hairspace Du,Du}_{\R^{d}}\ge c\norm{Du}_{2,\R^{d}}^{2}.\label{est: sufficient condition for A=001D4B coercivity}
\end{equation}
This can be seen by noticing that, by change of variable, the
above inequality remains true with $A\rbr{x,\varepsilon^{-1}y}$
in place of~$A\rbr{x,y}$. Then a partition of unity argument will
do the job, since $A$ is uniformly continuous in the first variable.

As~an~example of $A$ satisfying (\ref{est: sufficient condition for A=001D4B coercivity}),
let $b\rbr D$ be a matrix first-order differential operator with
symbol
\[
\xi\mapsto b\rbr{\xi}=\sum_{k=1}^{d}b_{k}\xi_{k},
\]
where~$b_{k}\in\C^{m\times n}$. Suppose that the symbol has the
property that, for some~$\alpha>0$,
\[
b\rbr{\xi}^{*}b\rbr{\xi}\ge\alpha\abs{\xi}^{2},\qquad\xi\in\R^{d}.
\]
Let $g$ be a function in $C^{0,1}\rbr{\bar{\R}^{d};\tilde L_{\infty}\rbr{\cell}}^{m\times m}$
with $\Re g$ uniformly positive definite. Now if we take $A_{kl}=b_{k}^{*}gb_{l}$,
then application of the Fourier transform  will yield
\[
\begin{aligned}\Re\rbr{A\rbr{x,\wc}\hairspace Du,Du}_{\R^{d}} & =\Re\rbr{g\rbr{x,\wc}\hairspace b\rbr Du,b\rbr Du}_{\R^{d}}\\
 & \ge\alpha\norm{\rbr{\Re g}^{-1/2}}_{C}^{-2}\norm{Du}_{2,\R^{d}}^{2}.
\end{aligned}
\]
Homogenization for self-adjoint operators of this type was studied
by Birman and Suslina in the purely periodic setting (see, e.g.,~\cite{BSu:2001},
\cite{BSu:2003}, \cite{BSu:2005}, \cite{BSu:2006}, \cite{Su:2013-1}
and~\cite{Su:2013-2}) and by Borisov in the locally periodic setting
(see~\cite{Bor:2008}).

Observe that the more restrictive Legendre condition, which amounts
to the uniform positive definiteness of~$\Re A$, does ensure coercivity,
but excludes some strongly elliptic operators with important applications~\textendash{}
such as certain elasticity operators.

\section{\label{sec: Effective operator}Effective operator}

Given $\xi\in\C^{d\times n}$ and~$x\in\R^{d}$, we let $N_{\xi}\rbr{x,\wc}$
be the weak solution of
\begin{equation}
D^{*}A\rbr{x,\wc}\hairspace\rbr{DN_{\xi}\rbr{x,\wc}+\xi}=0\label{def: N}
\end{equation}
in~$\tilde H_{0}^{1}\rbr{\cell}^{n}$. The~function~$N_{\xi}$ is
well defined, since $D^{*}A\rbr{x,\wc}\hairspace\xi$ is a continuous
linear functional on $\tilde H^{1}\rbr{\cell}^{n}$ and the operator~$D^{*}A\rbr{x,\wc}\hairspace D$
is strongly coercive on~$\tilde H^{1}\rbr{\cell}^{n}$, as we shall
now see.
\begin{lem}
\label{lem: Coercivity of D*A(x,=0022C5)D on Q}For~any~$x\in\R^{d}$
and~all~$u\in\tilde H^{1}\rbr{\cell}^{n}$, we have
\begin{equation}
\Re\rbr{A\rbr{x,\wc}\hairspace Du,Du}_{\cell}\ge c_{A}\norm{Du}_{2,\cell}^{2}.\label{est: Coercivity of D*A(x,=0022C5)D on Q}
\end{equation}

\end{lem}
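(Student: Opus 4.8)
The~plan is to deduce the cell estimate \eqref{est: Coercivity of D*A(x,=0022C5)D on Q} from the coercivity bound \eqref{est: A=001D4B is coercive} on $\R^{d}$ by testing the latter against a rescaled, localized periodic function and sending $\varepsilon\to0$. Fix $x_{0}\in\R^{d}$ and $v\in\tilde H^{1}(\cell)^{n}$, extended $\lattice$\nobreakdash-periodically to $\R^{d}$; I~shall prove \eqref{est: Coercivity of D*A(x,=0022C5)D on Q} with $x=x_{0}$ and $u=v$. Given $\delta>0$, choose $\phi\in C_{c}^{\infty}(\R^{d})$ with $\phi\not\equiv0$ and support in the ball $B(x_{0},\delta)$, and set $a=\norm{\phi}_{2,\R^{d}}^{2}>0$. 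For $\varepsilon\in\set E$ put $w_{\varepsilon}(y)=\varepsilon\hairspace\phi(y)\hairspace v(\varepsilon^{-1}y)$; then $w_{\varepsilon}\in H^{1}(\R^{d})^{n}$ and $Dw_{\varepsilon}=\phi\hairspace(Dv)(\varepsilon^{-1}\wc)+\varepsilon\hairspace(D\phi)\hairspace v(\varepsilon^{-1}\wc)$. The~role of the $\varepsilon$\nobreakdash-prefactor is that $\norm{w_{\varepsilon}}_{2,\R^{d}}^{2}=O(\varepsilon^{2})$ while $\norm{Dw_{\varepsilon}}_{2,\R^{d}}^{2}\to a\hairspace\norm{Dv}_{2,\cell}^{2}$ as $\varepsilon\to0$, the latter by the standard averaging of rapidly oscillating periodic functions (recall $\abs{\cell}=1$, so the mean over $\cell$ is the integral).

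The~next step is to freeze the slow variable. Since $A\in C^{0,1}(\bar{\R}^{d};\tilde L_{\infty}(\cell))^{n\times n}$, we have $\norm{A(y,\varepsilon^{-1}y)-A(x_{0},\varepsilon^{-1}y)}\le\seminorm{A}_{C^{0,1}}\hairspace\delta$ for a.e.\ $y\in B(x_{0},\delta)$, hence $\Re\rbr{A^{\varepsilon}Dw_{\varepsilon},Dw_{\varepsilon}}_{\R^{d}}\le\Re\rbr{A(x_{0},\varepsilon^{-1}\wc)\hairspace Dw_{\varepsilon},Dw_{\varepsilon}}_{\R^{d}}+\seminorm{A}_{C^{0,1}}\hairspace\delta\hairspace\norm{Dw_{\varepsilon}}_{2,\R^{d}}^{2}$. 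Applying the averaging property once more, now to the bounded $\cell$\nobreakdash-periodic function $z\mapsto\abr{A(x_{0},z)\hairspace Dv(z),Dv(z)}$, and dropping the $O(\varepsilon)$ terms that involve the piece $\varepsilon\hairspace(D\phi)\hairspace v(\varepsilon^{-1}\wc)$ of $Dw_{\varepsilon}$, I~get $\Re\rbr{A(x_{0},\varepsilon^{-1}\wc)\hairspace Dw_{\varepsilon},Dw_{\varepsilon}}_{\R^{d}}\to a\hairspace\Re\rbr{A(x_{0},\wc)\hairspace Dv,Dv}_{\cell}$ as $\varepsilon\to0$.

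Finally I~would feed $w_{\varepsilon}$ into \eqref{est: A=001D4B is coercive}, getting $\Re\rbr{A(x_{0},\varepsilon^{-1}\wc)\hairspace Dw_{\varepsilon},Dw_{\varepsilon}}_{\R^{d}}+\seminorm{A}_{C^{0,1}}\hairspace\delta\hairspace\norm{Dw_{\varepsilon}}_{2,\R^{d}}^{2}+C_{A}\norm{w_{\varepsilon}}_{2,\R^{d}}^{2}\ge c_{A}\norm{Dw_{\varepsilon}}_{2,\R^{d}}^{2}$, and letting $\varepsilon\to0$: since $\norm{w_{\varepsilon}}_{2,\R^{d}}\to0$ the $C_{A}$\nobreakdash-term disappears, and there remains $a\hairspace\Re\rbr{A(x_{0},\wc)\hairspace Dv,Dv}_{\cell}+\seminorm{A}_{C^{0,1}}\hairspace\delta\hairspace a\hairspace\norm{Dv}_{2,\cell}^{2}\ge c_{A}\hairspace a\hairspace\norm{Dv}_{2,\cell}^{2}$. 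Dividing by $a>0$ and then letting $\delta\to0$ gives precisely \eqref{est: Coercivity of D*A(x,=0022C5)D on Q}. I~expect the only real difficulty to be technical bookkeeping around the oscillating products~\textendash{} justifying the two averaging limits (for merely bounded periodic coefficients) and checking that the cross term and the lower-order piece of $Dw_{\varepsilon}$ are genuinely $O(\varepsilon)$; conceptually the proof is just this localize-and-rescale device, with the factor $\varepsilon$ chosen exactly so as to kill the $C_{A}\norm{u}_{2,\R^{d}}^{2}$ defect present in \eqref{est: A=001D4B is coercive}.
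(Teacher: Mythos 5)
Your proof is correct and uses essentially the same strategy as the paper: insert the rescaled test function $\varepsilon\varphi(\cdot)\hairspace v(\varepsilon^{-1}\cdot)$ into the coercivity bound~(\ref{est: A=001D4B is coercive}), note the $\varepsilon$-prefactor kills the $C_A$-term, and pass to the limit via the averaging lemma for rapidly oscillating periodic functions. The only cosmetic difference is that you localize around a fixed $x_0$ and invoke Lipschitz continuity to freeze the slow variable before sending $\delta\to0$, whereas the paper keeps the $x$-integral, obtains the averaged two-variable inequality, and then appeals to the arbitrariness of $\varphi$ and the continuity of $A$ in the first variable; also note your periodic integrand $z\mapsto\abr{A(x_0,z)Dv(z),Dv(z)}$ is only $L_1(\cell)$ (not bounded) when $v\in\tilde H^1(\cell)^n$, which is still fine for the averaging lemma but worth stating correctly.
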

\begin{proof}
Fix $u_{\varepsilon}=\varepsilon u^{\varepsilon}\varphi$ with $u\in\tilde C^{1}\rbr{\cell}^{n}$
and~$\varphi\in C_{c}^{\infty}\rbr{\R^{d}}$. We substitute $u_{\varepsilon}$
into (\ref{est: A=001D4B is coercive}) and let $\varepsilon$ tend
to~0. Then, because $u_{\varepsilon}$ and~$Du_{\varepsilon}-\rbr{Du}^{\varepsilon}\varphi$
converge in $L_{2}$ to~$0$,
\[
\lim_{\varepsilon\to0}\Re\int_{\R^{d}}\abr{A^{\varepsilon}\rbr x\hairspace\rbr{Du}^{\varepsilon}\rbr x,\rbr{Du}^{\varepsilon}\rbr x}\abs{\varphi\rbr x}^{2}\dd x\ge\lim_{\varepsilon\to0}c_{A}\int_{\R^{d}}\abs{\rbr{Du}^{\varepsilon}\rbr x}^{2}\abs{\varphi\rbr x}^{2}\dd x.
\]
It is well known  that if $f\in C_{c}\rbr{\R^{d};\tilde L_{\infty}\rbr{\cell}}$,
then
\[
\lim_{\varepsilon\to0}\int_{\R^{d}}f^{\varepsilon}\rbr x\dd x=\int_{\R^{d}}\int_{\cell}f\rbr{x,y}\dd x\dd y
\]
(see, for instance,~\cite[Lemmas~5.5 and~5.6]{Al:1992}). As~a result,
\[
\Re\int_{\R^{d}}\int_{\cell}\abr{A\rbr{x,y}\hairspace Du\rbr y,Du\rbr y}\abs{\varphi\rbr x}^{2}\dd x\dd y\ge c_{A}\int_{\R^{d}}\int_{\cell}\abs{Du\rbr y}^{2}\abs{\varphi\rbr x}^{2}\dd x\dd y.
\]
Since $\varphi$ is an arbitrary function in $C_{c}^{\infty}\rbr{\R^{d}}$
and since $A$ is continuous in the first variable, we conclude that,
for any~$x\in\R^{d}$,
\[
\Re\int_{\cell}\abr{A\rbr{x,y}\hairspace Du\rbr y,Du\rbr y}\dd y\ge c_{A}\int_{\cell}\abs{Du\rbr y}^{2}\dd y.\qedhere
\]
\end{proof}
It is clear from Lemma~\ref{lem: Coercivity of D*A(x,=0022C5)D on Q}
and Poincar\'{e}'s inequality~(\ref{est: Poincar=0000E9's inequality})
that 
\[
\Re\rbr{A\rbr{x,\wc}\hairspace Du,Du}_{\cell}\gtrsim\norm u_{1,2,\cell}^{2}
\]
for~every~$u\in\tilde H_{0}^{1}\rbr{\cell}^{n}$. Thus, the definition
of $N_{\xi}$ makes good sense.

Denote by $N$ the map sending $\xi$ to~$N_{\xi}$. Evidently,
$N_{\xi}$ depends linearly on $\xi$, so $N$ is simply an operator
of multiplication by a function  (still denoted by~$N$). The~next
lemma shows that $N$ has the same regularity in the first variable
as~$A$.
\begin{rem}
In~what follows, we denote differentiation in the first variable
by $D_{1}$ and differentiation in the second variable by~$D_{2}$.
When no confusion can arise, we omit the subscript and write~$D$,
as we did before.
\end{rem}
\begin{lem}
\label{lem: N is Lipschitz}We have~$N\in C^{0,1}\rbr{\bar{\R}^{d};\tilde H_{0}^{1}\rbr{\cell}}$.
\end{lem}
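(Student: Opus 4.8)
The plan is to show that $\xi \mapsto N_\xi$ is a bounded linear map from $\C^{d\times n}$ into $C^{0,1}(\bar\R^d;\tilde H^1_0(\cell))$, i.e.\ that $x\mapsto N_\xi(x,\wc)$ is bounded and Lipschitz as a map into $\tilde H^1_0(\cell)^n$, with constants controlled by $|\xi|$. Since $N$ is multiplication by a matrix-valued function, this is the same as estimating $\norm{N(x,\wc)}_{1,2,\cell}$ and $\norm{N(x_2,\wc)-N(x_1,\wc)}_{1,2,\cell}/|x_2-x_1|$. The boundedness part is the standard energy estimate: testing the defining equation~\eqref{def: N} for $N_\xi(x,\wc)$ against $N_\xi(x,\wc)$ itself and using the coercivity~\eqref{est: Coercivity of D*A(x,=0022C5)D on Q} of $D^*A(x,\wc)D$ together with Poincar\'e's inequality~\eqref{est: Poincar=0000E9's inequality} gives $\norm{N_\xi(x,\wc)}_{1,2,\cell}\lesssim \norm{A}_C|\xi|$, uniformly in $x$.

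For the Lipschitz bound, the key step is to subtract the two equations. Fix $x_1,x_2\in\R^d$ and write $w = N_\xi(x_2,\wc)-N_\xi(x_1,\wc)\in\tilde H^1_0(\cell)^n$. From~\eqref{def: N} at $x_2$ and at $x_1$ we get, for all test functions $v\in\tilde H^1(\cell)^n$,
\[
\bigrbr{A(x_2,\wc)\hairspace Dw, Dv}_{\cell} = -\bigrbr{\rbr{A(x_2,\wc)-A(x_1,\wc)}\hairspace\rbr{DN_\xi(x_1,\wc)+\xi}, Dv}_{\cell}.
\]
Taking $v = w$, applying coercivity~\eqref{est: Coercivity of D*A(x,=0022C5)D on Q} on the left, and using $\norm{A(x_2,\wc)-A(x_1,\wc)}_{\tilde L_\infty(\cell)}\le \seminorm{A}_{C^{0,1}}|x_2-x_1|$ on the right, together with the already-established bound $\norm{DN_\xi(x_1,\wc)+\xi}_{2,\cell}\lesssim|\xi|$, yields
\[
c_A\norm{Dw}_{2,\cell}^2 \lesssim |x_2-x_1|\,|\xi|\,\norm{Dw}_{2,\cell},
\]
hence $\norm{Dw}_{2,\cell}\lesssim |x_2-x_1|\,|\xi|$, and then Poincar\'e gives the same bound for $\norm{w}_{1,2,\cell}$. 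This is exactly the Lipschitz estimate we need.

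The only genuine subtlety, and the step I would be most careful about, is making sure the test-function manipulation is legitimate: $N_\xi(x,\wc)$ is defined by~\eqref{def: N} as an identity of functionals on $\tilde H^1(\cell)^n$ (not merely on $\tilde H^1_0$), so subtracting the equations at $x_1$ and $x_2$ is valid for every $v\in\tilde H^1(\cell)^n$, in particular for $v=w\in\tilde H^1_0(\cell)^n\subset\tilde H^1(\cell)^n$; and the right-hand side is a bona fide element of the dual of $\tilde H^1(\cell)^n$ because $A$ takes values in $\tilde L_\infty(\cell)$. One should also note that continuity of $x\mapsto N_\xi(x,\wc)$ (needed for membership in $C^{0,1}$, which in this paper requires uniform continuity) is immediate once the Lipschitz bound is in hand, since a globally Lipschitz map is uniformly continuous. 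Everything else is the routine Lax\textendash Milgram bookkeeping already set up in the preceding lemma, so the proof should be short.
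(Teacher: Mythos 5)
Your proof is correct and follows essentially the same route as the paper: first a uniform-in-$x$ energy estimate via coercivity, then subtraction of the equations at $x_1,x_2$, testing against the difference $N_\xi(x_2,\wc)-N_\xi(x_1,\wc)$ and again invoking coercivity, with Poincar\'e's inequality at the end to upgrade the $D_2$-estimate to the full $\tilde H^1_0(\cell)$-norm. The side remarks about the test-function manipulation and uniform continuity are fine but the paper treats them as routine and does not spell them out.
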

\begin{proof}
The~identity~(\ref{def: N}), together with Lemma~\ref{lem: Coercivity of D*A(x,=0022C5)D on Q},
yields
\[
c_{A}\norm{DN_{\xi}\rbr{x,\wc}}_{2,\cell}\le\norm{A\rbr{x,\wc}}_{\infty,\cell}\abs{\xi},
\]
whence
\begin{equation}
\norm{D_{2}N}_{L_{\infty}\rbr{\R^{d};L_{2}\rbr{\cell}}}\le c_{A}^{-1}\norm A_{C}.\label{est: L=002082-norm of DN(x,=0000B7)}
\end{equation}
Next, by (\ref{def: N}) again, for any~$x_{1},x_{2}\in\R^{d}$
and~$v\in\tilde H_{0}^{1}\rbr{\cell}^{n}$
\[
\begin{aligned}\hspace{2em} & \hspace{-2em}\bigrbr{A\rbr{x_{2},\wc}\hairspace\rbr{DN_{\xi}\rbr{x_{2},\wc}-DN_{\xi}\rbr{x_{1},\wc}},Dv}_{\cell}\\
 & =-\bigrbr{\rbr{A\rbr{x_{2},\wc}-A\rbr{x_{1},\wc}}\rbr{\xi+DN_{\xi}\rbr{x_{1},\wc}},Dv}_{\cell}.
\end{aligned}
\]
Taking~$v=N_{\xi}\rbr{x_{2},\wc}-N_{\xi}\rbr{x_{1},\wc}$ and using
Lemma~\ref{lem: Coercivity of D*A(x,=0022C5)D on Q}, we obtain
\[
c_{A}\norm{DN_{\xi}\rbr{x_{2},\wc}-DN_{\xi}\rbr{x_{1},\wc}}_{2,\cell}\le\norm{A\rbr{x_{2},\wc}-A\rbr{x_{1},\wc}}_{\infty,\cell}\norm{\xi+DN_{\xi}\rbr{x_{1},\wc}}_{2,\cell}.
\]
It~now follows from (\ref{est: L=002082-norm of DN(x,=0000B7)})
that
\[
\seminorm{D_{2}N}_{C^{0,1}\rbr{\bar{\R}^{d};L_{2}\rbr{\cell}}}\le c_{A}^{-1}\rbr{1+c_{A}^{-1}\norm A_{C}}\seminorm A_{C^{0,1}}.
\]
We have proved that $D_{2}N\in C^{0,1}\rbr{\bar{\R}^{d};L_{2}\rbr{\cell}}$.
But then  Poincar\'{e}'s inequality~(\ref{est: Poincar=0000E9's inequality})
implies that $N\in C^{0,1}\rbr{\bar{\R}^{d};L_{2}\rbr{\cell}}$ as
well.
\end{proof}
Let~$A^{0}\colon\R^{d}\to\B\rbr{\C^{d\times n}}$ be given~by
\begin{equation}
A^{0}\rbr x=\int_{\cell}A\rbr{x,y}\hairspace\rbr{I+D_{2}N\rbr{x,y}}\dd y.\label{def: Coefficient A=002070}
\end{equation}
Since $A$ and~$D_{2}N$ are continuous in the first variable, so
is~$A^{0}$. In~fact, we have $A^{0}\in C^{0,1}\rbr{\bar{\R}^{d}}$.
Indeed, the estimate
\[
\norm{A^{0}}_{C\rbr{\bar{\R}^{d}}}\le\norm A_{C}\norm{I+D_{2}N}_{C}
\]
is immediate from the definition of~$A^{0}$, and that
\[
\seminorm{A^{0}}_{C^{0,1}\rbr{\bar{\R}^{d}}}\le\norm A_{C}\seminorm{D_{2}N}_{C^{0,1}}+\seminorm A_{C^{0,1}}\norm{I+D_{2}N}_{C}
\]
follows by an easy calculation. Hence, $\norm{A^{0}}_{C^{0,1}\rbr{\bar{\R}^{d}}}$
is finite.

Now we define the effective operator~$\op A^{0}\colon H^{1}\rbr{\R^{d}}^{n}\to H^{-1}\rbr{\R^{d}}^{n}$
by setting
\begin{equation}
\op A^{0}=D^{*}A^{0}D.\label{def: A=002070}
\end{equation}
Observe that $\op A^{0}$ is bounded and coercive (recall G\r{a}rding's
inequality) and thus $m$\nobreakdash-sectorial. It can be proved
that $\op A^{0}$ satisfies an estimate similar to (\ref{est: A=001D4B is coercive})
with exactly the same constants, however the bound on its norm may
be different from~(\ref{est: A=001D4B is bounded}). Nevertheless,
the sector for $\op A^{0}$ remains the same as for~$\op A^{\varepsilon}$.
 We briefly sketch the argument; see~\cite[Section~2.3]{Se:2017-2}
for a related proof. First consider the two-scale effective system
as in \cite{Al:1992} and check that the associated form, which is
defined on $H^{1}\rbr{\R^{d}}^{n}\oplus L_{2}\rbr{\R^{d};\tilde H_{0}^{1}\rbr{\cell}}^{n}$~by
\[
u\oplus U\mapsto\rbr{A\rbr{D_{1}u+D_{2}U},D_{1}u+D_{2}U}_{\R^{d}\times\cell},
\]
is $m$\nobreakdash-sectorial with sector~$\set S$. We only remark
that the coercivity is obtained by substituting $u+\varepsilon U^{\varepsilon}$
(with sufficiently smooth $u$ and~$U$) into (\ref{est: A=001D4B is coercive})
for $u$ and letting $\varepsilon$ tend to~$0$; cf. the proof of
Lemma~\ref{lem: Coercivity of D*A(x,=0022C5)D on Q}. Then notice
that
\[
\rbr{\op A^{0}u,u}_{\R^{d}}=\rbr{A\rbr{D_{1}u+D_{2}U},D_{1}u+D_{2}U}_{\R^{d}\times\cell}
\]
provided~$U=ND_{1}u$ (which is definitely in~$L_{2}\rbr{\R^{d};\tilde H_{0}^{1}\rbr{\cell}}^{n}$).
The~claim is proved.

Thus, we see that the operator~$\op A_{\mu}^{0}=\op A^{0}-\mu$
is an isomorphism as long as $\mu$ is outside~$\set S$. In~addition,
standard regularity theory for strongly elliptic systems (see, e.g.,~\cite[Theorem~4.16]{McL:2000})
implies that the pre-image of $L_{2}\rbr{\R^{d}}^{n}$ under $\smash[b]{\op A_{\mu}^{0}}$
is all of $H^{2}\rbr{\R^{d}}^{n}$ and for any~$f\in L_{2}\rbr{\R^{d}}^{n}$
\begin{equation}
\norm{\rbr{\op A_{\mu}^{0}}^{-1}f}_{2,2,\R^{d}}\lesssim\norm f_{2,\R^{d}}.\label{est: Norm of (A=002070-=0003BC)=00207B=0000B9}
\end{equation}

Let us return to our discussion of coercivity at the end of the
previous section.  As~we have seen, (\ref{est: Coercivity of D*A(x,=0022C5)D on Q})
follows from (\ref{est: A=001D4B is coercive}), which in turn is
a consequence of~(\ref{est: sufficient condition for A=001D4B coercivity}).
 On~the other hand,  (\ref{est: Coercivity of D*A(x,=0022C5)D on Q})
does not generally imply~(\ref{est: sufficient condition for A=001D4B coercivity}),
and there are examples (for~$n>1$, of course) where (\ref{est: Coercivity of D*A(x,=0022C5)D on Q})
holds, but (\ref{est: sufficient condition for A=001D4B coercivity})
is false, see~\cite{BF:2015}. In~such cases, a subsequence of
$\smash[b]{\rbr{\op A_{\mu}^{\varepsilon}}^{-1}}$  may still converge
in the weak operator topology to $\smash[b]{\rbr{\op A_{\mu}^{0}}^{-1}}$,
but $\op A^{0}$ will fail to be strongly elliptic, i.e., $A^{0}$
will not satisfy the Legendre\textendash Hadamard condition.

\section{\label{sec: Correctors}Correctors}

Let the operator~$\op K_{\mu}\colon L_{2}\rbr{\R^{d}}^{n}\to\tilde H^{1}\rbr{\R^{d}\times\cell}^{n}$
be given~by
\begin{equation}
\op K_{\mu}=ND_{1}\rbr{\op A_{\mu}^{0}}^{-1}.\label{def: K}
\end{equation}
Lemma~\ref{lem: N is Lipschitz}, combined with the estimate~(\ref{est: Norm of (A=002070-=0003BC)=00207B=0000B9}),
readily implies that $\op K_{\mu}$ is continuous:
\begin{equation}
\norm{\op K_{\mu}f}_{1,2,\R^{d}\times\cell}\lesssim\norm f_{2,\R^{d}}.\label{est: K is bounded}
\end{equation}
The~very same argument shows that $D_{1}D_{2}\op K_{\mu}$ is bounded
on~$L_{2}\rbr{\R^{d}}^{n}$ as well:
\begin{equation}
\norm{D_{1}D_{2}\op K_{\mu}f}_{2,\R^{d}\times\cell}\lesssim\norm f_{2,\R^{d}}.\label{est: D=002081D=002082K is bounded}
\end{equation}

Since we do not impose any extra  assumptions on the coefficients,
the traditional corrector~$\tau^{\varepsilon}\op K_{\mu}$ will
not even map $L_{2}\rbr{\R^{d}}^{n}$ into itself.  So we must first
appropriately regularize the traditional corrector, and a smoothing
operator is used for exactly this purpose.

\subsection{Smoothing}

Let $\op T^{\varepsilon}\colon L_{2}\rbr{\R^{d}\times\cell}\to L_{2}\rbr{\R^{d}\times\cell;L_{2}\rbr{\cell}}$
be the translation operator
\begin{equation}
\op T^{\varepsilon}u\rbr{x,y,z}=u\rbr{x+\varepsilon z,y},\label{def: T=001D4B}
\end{equation}
where $\rbr{x,y}\in\R^{d}\times\cell$ and~$z\in\cell$. Certainly,
for any $u,v\in L_{2}\rbr{\R^{d}\times\cell}$ satisfying $uv\in L_{2}\rbr{\R^{d}\times\cell}$
we have $\op T^{\varepsilon}uv=\rbr{\op T^{\varepsilon}u}\rbr{\op T^{\varepsilon}v}$.
Next, the adjoint of $\op T^{\varepsilon}$ is given~by
\[
\rbr{\op T^{\varepsilon}}^{*}u\rbr{x,y}=\int_{\cell}u\rbr{x-\varepsilon z,y,z}\dd z.
\]
Note that $\rbr{\op T^{\varepsilon}}^{*}$ is defined on $L_{2}\rbr{\R^{d}\times\cell}$
and~$L_{2}\rbr{\R^{d}}$ as well, by way of identifying these spaces
with the corresponding subspaces of~$L_{2}\rbr{\R^{d}\times\cell;L_{2}\rbr{\cell}}$.
We define the Steklov smoothing operator~$\op S^{\varepsilon}\colon L_{2}\rbr{\R^{d}\times\cell}\to L_{2}\rbr{\R^{d}\times\cell}$
to be the restriction of $\rbr{\op T^{\varepsilon}}^{*}$ to~$L_{2}\rbr{\R^{d}\times\cell}$.
In~other words,
\begin{equation}
\op S^{\varepsilon}u\rbr{x,y}=\int_{\cell}\op T^{\varepsilon}u\rbr{x,y,z}\dd z.\label{def: S=001D4B}
\end{equation}
The~operator~$\op S^{\varepsilon}$ is plainly self-adjoint.

Here we collect some facts about $\op T^{\varepsilon}$ and~$\op S^{\varepsilon}$.
\begin{lem}
\label{lem: =0003C4=001D4BT=001D4B}The~restriction of $\tau^{\varepsilon}\op T^{\varepsilon}$
to $\tilde L_{2}\rbr{\R^{d}\times\cell}$ is an isometry.
\end{lem}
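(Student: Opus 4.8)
The plan is to unwind all the definitions and reduce the claim to a change of variables. Recall that $\tau^{\varepsilon}$ acts by $(\tau^{\varepsilon}u)(x,z)=u(x,\varepsilon^{-1}x,z)$ and that $\op T^{\varepsilon}$ acts by $(\op T^{\varepsilon}v)(x,y,z)=v(x+\varepsilon z,y)$. Composing these, for $v\in\tilde L_{2}(\R^{d}\times\cell)$ one has
\[
(\tau^{\varepsilon}\op T^{\varepsilon}v)(x,z)=(\op T^{\varepsilon}v)(x,\varepsilon^{-1}x,z)=v(x+\varepsilon z,\varepsilon^{-1}x).
\]
So $\tau^{\varepsilon}\op T^{\varepsilon}$ maps a function $v$ of $(x,y)\in\R^{d}\times\cell$ to the function $w(x,z)=v(x+\varepsilon z,\varepsilon^{-1}x)$ of $(x,z)\in\R^{d}\times\cell$. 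The target space is $L_{2}(\R^{d}\times\cell)=L_{2}(\R^{d};L_{2}(\cell))$, so what must be shown is
\[
\int_{\R^{d}}\int_{\cell}\abs{v(x+\varepsilon z,\varepsilon^{-1}x)}^{2}\dd z\dd x=\int_{\R^{d}}\int_{\cell}\abs{v(x,y)}^{2}\dd y\dd x=\norm v_{2,\R^{d}\times\cell}^{2}.
\]

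First I would perform, for each fixed $z\in\cell$, the substitution $x\mapsto x-\varepsilon z$ in the inner region; by translation invariance of Lebesgue measure on $\R^{d}$ the integral $\int_{\R^{d}}\abs{v(x+\varepsilon z,\varepsilon^{-1}x)}^{2}\dd x$ becomes $\int_{\R^{d}}\abs{v(x,\varepsilon^{-1}x-z)}^{2}\dd x$. After interchanging the order of integration (Tonelli's theorem, the integrand being nonnegative and measurable by the Carath\'eodory-type remarks preceding~\eqref{def: =0003C4=001D4B}), the quantity in question equals
\[
\int_{\R^{d}}\int_{\cell}\abs{v(x,\varepsilon^{-1}x-z)}^{2}\dd z\dd x.
\]
Now for fixed $x$ I would use periodicity of $v$ in its second argument: the map $z\mapsto \varepsilon^{-1}x-z$ sends the unit cube $\cell$ onto a translate of $\cell$, and since $v(x,\wc)$ is a periodic function, its integral of $\abs{v(x,\wc)}^{2}$ over any unit cube equals the integral over~$\cell$. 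Hence the inner integral is $\int_{\cell}\abs{v(x,y)}^{2}\dd y$, and integrating in $x$ gives $\norm v_{2,\R^{d}\times\cell}^{2}$, which is exactly the isometry claim.

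The only genuinely delicate point — the main obstacle — is the measurability and the legitimacy of the manipulations: one must know that $(x,z)\mapsto v(x+\varepsilon z,\varepsilon^{-1}x)$ is a well-defined measurable function on $\R^{d}\times\cell$ for $v$ in the completion $\tilde L_{2}(\R^{d}\times\cell)$, so that Tonelli applies and the pointwise a.e.\ identities carry over to the $L_{2}$ norm. This is handled by first checking everything for $v\in\tilde C(\R^{d}\times\cell)$, where all the change-of-variables and periodicity steps above are elementary, and then passing to the limit: the computation shows $\norm{\tau^{\varepsilon}\op T^{\varepsilon}v}_{2,\R^{d}\times\cell}=\norm v_{2,\R^{d}\times\cell}$ on the dense class, so $\tau^{\varepsilon}\op T^{\varepsilon}$ extends to an isometry on all of $\tilde L_{2}(\R^{d}\times\cell)$, which is the assertion of the lemma. (Alternatively one invokes the Carath\'eodory measurability statement quoted just before~\eqref{def: =0003C4=001D4B} to justify measurability directly, after which the displayed Tonelli and periodicity steps finish the proof without any approximation.)
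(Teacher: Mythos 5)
Your proposal is correct and follows essentially the same route as the paper: the change of variables $x\mapsto x-\varepsilon z$ (to pass to $v(x,\varepsilon^{-1}x-z)$) followed by periodicity of $v$ in the second variable. The paper's proof is just a two-sentence compression of exactly this computation; your extra remarks on measurability and density are sound but not spelled out in the paper.
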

\begin{proof}
By~change of variable,
\[
\norm{\tau^{\varepsilon}\op T^{\varepsilon}u}_{2,\R^{d}\times\cell}^{2}=\int_{\R^{d}}\int_{\cell}\abs{u\rbr{x,\varepsilon^{-1}x-z}}^{2}\dd x\dd z.
\]
But since $u$ is periodic in the second variable, this equals~$\norm u_{2,\R^{d}\times\cell}^{2}$.
\end{proof}
A~related result for $\op S^{\varepsilon}$ is the following.
\begin{lem}
\label{lem: =0003C4=001D4BS=001D4B}The~restriction of $\tau^{\varepsilon}\op S^{\varepsilon}$
to $\tilde L_{2}\rbr{\R^{d}\times\cell}$ is bounded, with bound at
most~$1$.
\end{lem}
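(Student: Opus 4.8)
The plan is to deduce the bound directly from Lemma~\ref{lem: =0003C4=001D4BT=001D4B}, exploiting that $\op S^{\varepsilon}$ is, by its very definition~(\ref{def: S=001D4B}), an \emph{average} of the translates $\op T^{\varepsilon}u\rbr{\wc,\wc,z}$ over $z\in\cell$, together with the fact that $\cell$ has unit volume, so that integration over $\cell$ is integration against a probability measure and the Cauchy\textendash Schwarz (equivalently, Jensen) inequality applies with constant~$1$.

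Concretely, for $u\in\tilde L_{2}\rbr{\R^{d}\times\cell}$ I would first unwind the definitions~(\ref{def: =0003C4=001D4B}), (\ref{def: T=001D4B}) and~(\ref{def: S=001D4B}) to obtain, for a.e.~$x\in\R^{d}$,
\[
\tau^{\varepsilon}\op S^{\varepsilon}u\rbr x=\op S^{\varepsilon}u\rbr{x,\varepsilon^{-1}x}=\int_{\cell}u\rbr{x+\varepsilon z,\varepsilon^{-1}x}\dd z=\int_{\cell}\tau^{\varepsilon}\op T^{\varepsilon}u\rbr{x,z}\dd z,
\]
the last equality being the recognition of the integrand as $\tau^{\varepsilon}\op T^{\varepsilon}u$ evaluated at $\rbr{x,z}$ (cf.\ the proof of Lemma~\ref{lem: =0003C4=001D4BT=001D4B}); the $z$\nobreakdash-integration commutes with $\tau^{\varepsilon}$ since it touches neither the slow nor the fast variable. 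Squaring, integrating in~$x$, and applying Jensen's inequality in the $z$\nobreakdash-variable (using $\abs{\cell}=1$) then gives
\[
\norm{\tau^{\varepsilon}\op S^{\varepsilon}u}_{2,\R^{d}}^{2}=\int_{\R^{d}}\biggl\lvert\int_{\cell}\tau^{\varepsilon}\op T^{\varepsilon}u\rbr{x,z}\dd z\biggr\rvert^{2}\dd x\le\int_{\R^{d}}\int_{\cell}\bigabs{\tau^{\varepsilon}\op T^{\varepsilon}u\rbr{x,z}}^{2}\dd z\dd x=\norm{\tau^{\varepsilon}\op T^{\varepsilon}u}_{2,\R^{d}\times\cell}^{2}.
\]
By Lemma~\ref{lem: =0003C4=001D4BT=001D4B} the right-hand side equals $\norm u_{2,\R^{d}\times\cell}^{2}$, so $\norm{\tau^{\varepsilon}\op S^{\varepsilon}u}_{2,\R^{d}}\le\norm u_{2,\R^{d}\times\cell}$, which is precisely the asserted bound.

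There is no genuine obstacle here: the argument is a one-line Jensen inequality followed by the preceding lemma. The only point calling for a little care is the bookkeeping of the two copies of~$\cell$ involved \textemdash{} the one carrying the periodicity of~$u$ and the auxiliary one introduced by the smoothing \textemdash{} and verifying that the identity $\tau^{\varepsilon}\op S^{\varepsilon}u\rbr x=\int_{\cell}\tau^{\varepsilon}\op T^{\varepsilon}u\rbr{x,z}\dd z$ is legitimate, which is immediate from the definitions. Periodicity of~$u$ in its second argument has already been used inside Lemma~\ref{lem: =0003C4=001D4BT=001D4B}, so it need not be invoked again.
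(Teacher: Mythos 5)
Your proof is correct and is exactly the argument the paper intends: the paper's one-line proof ("This is immediate from Cauchy's inequality and Lemma~\ref{lem: =0003C4=001D4BT=001D4B}") is precisely the Cauchy--Schwarz (Jensen) step over the unit-volume cell $\cell$ combined with the isometry of $\tau^{\varepsilon}\op T^{\varepsilon}$, which you have spelled out. The identity $\tau^{\varepsilon}\op S^{\varepsilon}u\rbr x=\int_{\cell}\tau^{\varepsilon}\op T^{\varepsilon}u\rbr{x,z}\dd z$ and the subsequent estimate are both valid, so there is nothing to add.
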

\begin{proof}
This is immediate from Cauchy's inequality and Lemma~\ref{lem: =0003C4=001D4BT=001D4B}.
\end{proof}
It~is easy to see that both $\op T^{\varepsilon}$ and~$\op S^{\varepsilon}$
converge in the strong operator topology to the identity operator,
yet they do not converge in  norm. The~uniform convergence will,
however, take place if we restrict them to certain Sobolev spaces.
\begin{lem}
\label{lem: T=001D4B-I}For~any~$u\in C_{c}^{\infty}\rbr{\R^{d}\times\cell}$
we have
\begin{equation}
\norm{\rbr{\op T^{\varepsilon}-\op I}u}_{2,\R^{d}\times\cell\times\cell}\lesssim\varepsilon\norm{D_{1}u}_{2,\R^{d}\times\cell}.\label{est: T=001D4B-I is of order =0003B5}
\end{equation}
\end{lem}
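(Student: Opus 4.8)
The plan is to reduce the estimate to the elementary ``difference quotient'' bound for smooth functions: I would write the increment $\op T^{\varepsilon}u\rbr{x,y,z}-u\rbr{x,y}=u\rbr{x+\varepsilon z,y}-u\rbr{x,y}$ as the integral of the first-variable derivative of $u$ along the segment joining $x$ to $x+\varepsilon z$, then square, apply the Cauchy--Schwarz inequality, integrate over $\R^{d}\times\cell\times\cell$, and finally undo the shift by a translation in $x$.

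In more detail, first I would use that, since $u\in C_{c}^{\infty}\rbr{\R^{d}\times\cell}$, the fundamental theorem of calculus gives
\[
u\rbr{x+\varepsilon z,y}-u\rbr{x,y}=\varepsilon\int_{0}^{1}\sum_{k=1}^{d}z_{k}\hairspace\rbr{\partial_{k}u}\rbr{x+t\varepsilon z,y}\dd t,
\]
so that, bounding $\abs z\le\sqrt{d}/2$ for $z\in\cell$ and applying the Cauchy--Schwarz inequality first in $\R^{d}$ and then in $t$,
\[
\bigabs{\op T^{\varepsilon}u\rbr{x,y,z}-u\rbr{x,y}}^{2}\le\tfrac{d}{4}\hairspace\varepsilon^{2}\int_{0}^{1}\bigabs{\rbr{\nabla_{1}u}\rbr{x+t\varepsilon z,y}}^{2}\dd t.
\]
Then I would integrate this over $\rbr{x,y,z}\in\R^{d}\times\cell\times\cell$; by Tonelli's theorem the integrations may be performed in any order, and for each fixed $t\in[0,1]$ and $z\in\cell$ the change of variables $x\mapsto x-t\varepsilon z$ shows that $\int_{\R^{d}}\abs{\rbr{\nabla_{1}u}\rbr{x+t\varepsilon z,y}}^{2}\dd x=\int_{\R^{d}}\abs{\rbr{\nabla_{1}u}\rbr{x,y}}^{2}\dd x$, independent of $t$ and $z$. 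Since $\int_{0}^{1}\dd t=1$ and $\int_{\cell}\dd z=1$, this yields
\[
\norm{\rbr{\op T^{\varepsilon}-\op I}u}_{2,\R^{d}\times\cell\times\cell}^{2}\le\tfrac{d}{4}\hairspace\varepsilon^{2}\norm{\nabla_{1}u}_{2,\R^{d}\times\cell}^{2},
\]
and, because $D_{1}=-i\nabla_{1}$ has the same pointwise modulus as $\nabla_{1}$, taking square roots gives the claim with implicit constant $\sqrt{d}/2$.

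I do not expect any real obstacle: this is the standard order-$\varepsilon$ translation estimate for $H^{1}$ functions, and the only steps requiring (entirely routine) justification are the use of the fundamental theorem of calculus, immediate from smoothness, and the interchange of integrals together with the translation change of variables, immediate from nonnegativity of the integrand and the compact support of~$u$. I would also record, although it is not needed for the statement itself, that by the density of $C_{c}^{\infty}\rbr{\R^{d}\times\cell}$ in $H^{1}\rbr{\R^{d}\times\cell}$ the bound~(\ref{est: T=001D4B-I is of order =0003B5}) persists for every $u\in H^{1}\rbr{\R^{d}\times\cell}$, which is the form in which it is used later.
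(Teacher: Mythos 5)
Your proof is correct and follows essentially the same route as the paper's: write the increment via the fundamental theorem of calculus along the segment from $x$ to $x+\varepsilon z$, apply Cauchy--Schwarz, and use the translation invariance of the $L_{2}$-norm in $x$. The constant you obtain, $\sqrt{d}/2$, is exactly $\cellradius=\tfrac12\diam\cell$ as in the paper.
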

\begin{proof}
Notice that
\[
u\rbr{x+\varepsilon z,y}-u\rbr{x,y}=\varepsilon i\int_{0}^{1}\abr{D_{1}u\rbr{x+\varepsilon tz,y},z}\dd t.
\]
Hence,
\[
\norm{\rbr{\op T^{\varepsilon}-\op I}u\rbr{\wc,y,z}}_{2,\R^{d}}\le\varepsilon\cellradius\norm{D_{1}u\rbr{\wc,y}}_{2,\R^{d}},
\]
where $\cellradius=1/2\diam\cell$. Integrating out the~$y$ and~$z$
variables then yields~(\ref{est: T=001D4B-I is of order =0003B5}).
\end{proof}
\begin{lem}
\label{lem: S=001D4B-I}For~any~$u\in C_{c}^{\infty}\rbr{\R^{d}\times\cell}$
we have
\begin{align}
\norm{\rbr{\op S^{\varepsilon}-\op I}u}_{2,\R^{d}\times\cell} & \lesssim\varepsilon\norm{D_{1}u}_{2,\R^{d}\times\cell},\label{est: S=001D4B-I is of order =0003B5}\\
\norm{\rbr{\op S^{\varepsilon}-\op I}u}_{2,\R^{d}\times\cell} & \lesssim\varepsilon^{2}\norm{D_{1}D_{1}u}_{2,\R^{d}\times\cell}.\label{est: S=001D4B-I is of order =0003B5=0000B2}
\end{align}
\end{lem}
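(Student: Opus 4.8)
The plan is to exploit the fact that $\op S^{\varepsilon}$ is an average of the translations $\op T^{\varepsilon}$ over the unit cube $\cell$, so that the two estimates follow from a first- and a second-order Taylor expansion of $u$ in the slow variable, respectively. Concretely, for fixed $(x,y)$ I would write
\[
\op S^{\varepsilon}u\rbr{x,y}-u\rbr{x,y}=\int_{\cell}\bigrbr{u\rbr{x+\varepsilon z,y}-u\rbr{x,y}}\dd z,
\]
using that $\abs{\cell}=1$. For the first estimate~(\ref{est: S=001D4B-I is of order =0003B5}) this is essentially Lemma~\ref{lem: T=001D4B-I} already: bound the inner difference by $\int_{0}^{1}\bigabs{\abr{D_{1}u\rbr{x+\varepsilon tz,y},\varepsilon z}}\dd t$, apply Minkowski's integral inequality in $L_{2}\rbr{\R^{d}\times\cell}$ together with the translation-invariance of the Lebesgue measure, and note that $\abs{z}\le\cellradius$ for $z\in\cell$. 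This gives~(\ref{est: S=001D4B-I is of order =0003B5}) with an implied constant depending only on $d$ (through $\cellradius$).

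For the second estimate~(\ref{est: S=001D4B-I is of order =0003B5=0000B2}) the key point is that the first-order term in the Taylor expansion integrates to zero because $\cell$ is symmetric about the origin. Writing the second-order Taylor formula with integral remainder,
\[
u\rbr{x+\varepsilon z,y}-u\rbr{x,y}=\varepsilon\abr{D_{1}u\rbr{x,y},iz}-\varepsilon^{2}\int_{0}^{1}\rbr{1-t}\abr{D_{1}D_{1}u\rbr{x+\varepsilon tz,y}\hairspace z,z}\dd t,
\]
and integrating over $z\in\cell$, the linear term vanishes since $\int_{\cell}z\dd z=0$. What remains is $-\varepsilon^{2}\int_{\cell}\int_{0}^{1}\rbr{1-t}\abr{D_{1}D_{1}u\rbr{x+\varepsilon tz,y}\hairspace z,z}\dd t\dd z$; taking the $L_{2}\rbr{\R^{d}\times\cell}$ norm, using Minkowski's inequality in the $t$ and $z$ integrals, the translation-invariance of Lebesgue measure to replace $\norm{D_{1}D_{1}u\rbr{\wc+\varepsilon tz,\wc}}_{2,\R^{d}\times\cell}$ by $\norm{D_{1}D_{1}u}_{2,\R^{d}\times\cell}$, and $\abs{z}^{2}\le\cellradius^{2}$, together with $\int_{0}^{1}\rbr{1-t}\dd t=1/2$, yields~(\ref{est: S=001D4B-I is of order =0003B5=0000B2}).

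There is essentially no obstacle here; the only point requiring a little care is the passage from the pointwise (in $(x,y)$) Taylor identities to the $L_{2}$ bounds, which is handled uniformly by Minkowski's integral inequality and a change of variables $x\mapsto x-\varepsilon tz$ in the $\R^{d}$ integral. Since $u\in C_{c}^{\infty}\rbr{\R^{d}\times\cell}$, all the integrals above are absolutely convergent and Fubini applies without comment. One should note that $D_{1}D_{1}u$ is matrix-valued (the Hessian in the slow variable), so $\abr{D_{1}D_{1}u\, z,z}$ denotes the associated quadratic form; the notation $\norm{D_{1}D_{1}u}_{2,\R^{d}\times\cell}$ is the corresponding $L_{2}$-norm of that Hessian, consistent with the statement.
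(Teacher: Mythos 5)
Your proof follows the same route as the paper's: the first estimate is deduced from the bound on $\op T^{\varepsilon}-\op I$ by averaging over $\cell$, and the second uses the second-order Taylor expansion with integral remainder, exploiting that the linear term has mean zero over the centered cube $\cell$ and then passing to $L_{2}$ by translation invariance. The argument is correct and matches the paper's in all essentials.
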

\begin{proof}
The~inequality~(\ref{est: S=001D4B-I is of order =0003B5}) comes
from~(\ref{est: T=001D4B-I is of order =0003B5}). To~prove (\ref{est: S=001D4B-I is of order =0003B5=0000B2}),
notice that
\[
u\rbr{x+\varepsilon z,y}-u\rbr{x,y}=\varepsilon i\abr{D_{1}u\rbr{x,y},z}-\varepsilon^{2}\int_{0}^{1}\rbr{1-t}\abr{D_{1}D_{1}u\rbr{x+\varepsilon tz,y}\hairspace z,z}\dd t.
\]
The~first term on the right-hand side has mean value zero for a.e.~$x$
and~$y$ (because $\cell$ is centered at the origin), so
\[
\norm{\rbr{\op S^{\varepsilon}-\op I}u\rbr{\wc,y}}_{2,\R^{d}}\le\varepsilon^{2}\cellradius^{2}\norm{D_{1}D_{1}u\rbr{\wc,y}}_{2,\R^{d}}.
\]
Integrating over $\cell$ completes the proof.
\end{proof}
Now we can prove the following result.
\begin{lem}
\label{lem: =0003C4=001D4BT=001D4B-=0003C4=001D4BS=001D4B}For~any~$u\in\tilde C_{c}^{\infty}\rbr{\R^{d}\times\cell}$
we have
\[
\norm{\tau^{\varepsilon}\op T^{\varepsilon}u-\tau^{\varepsilon}\op S^{\varepsilon}u}_{2,\R^{d}\times\cell}\lesssim\varepsilon\norm{D_{1}u}_{2,\R^{d}\times\cell}.
\]
\end{lem}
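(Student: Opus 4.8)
The key observation is that, unlike $\tau^{\varepsilon}\rbr{\op T^{\varepsilon}-\op I}u$, the function $\tau^{\varepsilon}\op T^{\varepsilon}u-\tau^{\varepsilon}\op S^{\varepsilon}u$ has, for each fixed~$x$, mean value zero over~$\cell$ in the remaining variable; this is what makes a Poincar\'e inequality applicable and is the crux of the argument. (A direct estimate of $\tau^{\varepsilon}\rbr{\op T^{\varepsilon}-\op I}u$ along the same lines does not work, since that function has no such vanishing mean and $\tau^{\varepsilon}$ by itself is not bounded on $\tilde L_{2}$.)

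I would begin by rewriting the left-hand side. Put $g\coloneqq\tau^{\varepsilon}\op T^{\varepsilon}u$, i.e.\ $g$ is the function $\rbr{x,z}\mapsto u\rbr{x+\varepsilon z,\varepsilon^{-1}x}$ on $\R^{d}\times\cell$, which for a.e.~$x$ is smooth in~$z$. Since $\tau^{\varepsilon}\op S^{\varepsilon}u\rbr{x}=\int_{\cell}u\rbr{x+\varepsilon z',\varepsilon^{-1}x}\dd z'=\int_{\cell}g\rbr{x,z'}\dd z'$, we have $\tau^{\varepsilon}\op T^{\varepsilon}u-\tau^{\varepsilon}\op S^{\varepsilon}u=g-\int_{\cell}g\rbr{\wc,z'}\dd z'$. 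For a.e.~$x$ the function $z\mapsto g\rbr{x,z}-\int_{\cell}g\rbr{x,z'}\dd z'$ lies in $H^{1}\rbr{\cell}^{n}$ and has zero mean over~$\cell$, so Poincar\'e's inequality on the cube, followed by squaring and integrating in~$x$, gives
\[
\norm{\tau^{\varepsilon}\op T^{\varepsilon}u-\tau^{\varepsilon}\op S^{\varepsilon}u}_{2,\R^{d}\times\cell}\lesssim\norm{\nabla_{z}g}_{2,\R^{d}\times\cell},
\]
where $\nabla_{z}$ is the gradient in the third variable. Differentiating the substitution, $\nabla_{z}g\rbr{x,z}=\varepsilon\,\rbr{\nabla_{1}u}\rbr{x+\varepsilon z,\varepsilon^{-1}x}=\varepsilon\,\tau^{\varepsilon}\op T^{\varepsilon}\rbr{\nabla_{1}u}\rbr{x,z}$, and since $\nabla_{1}u\in\tilde C_{c}^{\infty}\rbr{\R^{d}\times\cell}\subset\tilde L_{2}\rbr{\R^{d}\times\cell}$, Lemma~\ref{lem: =0003C4=001D4BT=001D4B}, applied to each component, yields $\norm{\nabla_{z}g}_{2,\R^{d}\times\cell}=\varepsilon\norm{\nabla_{1}u}_{2,\R^{d}\times\cell}=\varepsilon\norm{D_{1}u}_{2,\R^{d}\times\cell}$. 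Combining this with the display finishes the proof.

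The only point calling for comment is the Poincar\'e inequality on the cube, needed here because $g\rbr{x,\wc}$ is not periodic in~$z$, so~(\ref{est: Poincar=0000E9's inequality}) cannot be used. It is standard; if a self-contained argument is wanted, one writes $g\rbr{x,z}-\int_{\cell}g\rbr{x,z'}\dd z'=\int_{\cell}\bigrbr{g\rbr{x,z}-g\rbr{x,z'}}\dd z'$, estimates $g\rbr{x,z}-g\rbr{x,z'}$ by the fundamental theorem of calculus along the segment from~$z'$ to~$z$ (which lies in~$\cell$ since $\cell$ is convex) and Cauchy's inequality, and changes the variable to $w=\rbr{1-t}z'+tz$; the symmetry $z\leftrightarrow z'$ reduces matters to $t\ge 1/2$, keeping the Jacobian~$t^{-d}$ bounded, and one lands on $\norm{g\rbr{x,\wc}-\int_{\cell}g\rbr{x,z'}\dd z'}_{2,\cell}^{2}\lesssim\norm{\nabla_{z}g\rbr{x,\wc}}_{2,\cell}^{2}$ with a dimensional constant. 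I expect this last bit of bookkeeping to be the only mildly delicate point; everything else is routine.
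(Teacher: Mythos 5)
Your proof is correct, but it takes a genuinely different route from the paper's. The paper's argument is a short telescoping: since $\op S^{\varepsilon}$ and $\op T^{\varepsilon}$ commute (applying $\op S^{\varepsilon}$ to $\op T^{\varepsilon}u$ with the new variable treated as a parameter), one writes
\[
\tau^{\varepsilon}\op T^{\varepsilon}u-\tau^{\varepsilon}\op S^{\varepsilon}u
=\tau^{\varepsilon}\op T^{\varepsilon}\rbr{\op I-\op S^{\varepsilon}}u+\tau^{\varepsilon}\op S^{\varepsilon}\rbr{\op T^{\varepsilon}-\op I}u
\]
and then applies Lemma~\ref{lem: =0003C4=001D4BT=001D4B} together with Lemma~\ref{lem: S=001D4B-I} to the first term, and Lemma~\ref{lem: =0003C4=001D4BS=001D4B} together with Lemma~\ref{lem: T=001D4B-I} to the second. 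This is a pure reuse of the four preceding lemmas. You instead observe that $\tau^{\varepsilon}\op T^{\varepsilon}u-\tau^{\varepsilon}\op S^{\varepsilon}u$, viewed as a function of the auxiliary variable~$z$ for fixed~$x$, has zero mean over~$\cell$, apply Poincar\'e's inequality on the cube to reduce the estimate to $\norm{\nabla_{z}g}_{2}$, and use the chain rule plus the isometry in Lemma~\ref{lem: =0003C4=001D4BT=001D4B} to identify $\nabla_{z}g=\varepsilon\,\tau^{\varepsilon}\op T^{\varepsilon}D_{1}u$. That is a clean and conceptually transparent argument; in particular, it exposes directly why the $\op S^{\varepsilon}$-subtraction is essential, namely it enforces the vanishing mean. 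The one tradeoff is that you need the Poincar\'e inequality for mean-zero $H^{1}$ functions on a cube, which is not (\ref{est: Poincar=0000E9's inequality}) (that one is the periodic version, and as you rightly point out $g\rbr{x,\wc}$ is not periodic in~$z$); your sketch of that inequality via the convexity of~$\cell$ and the fundamental theorem of calculus is correct and standard, but it is an extra ingredient the paper does not invoke. Conversely, the paper's proof is shorter given the preceding four lemmas but obscures the structural reason the estimate holds. Both arguments are sound, and yours also generalizes painlessly to the $L_{p}$ setting mentioned in Remark~\ref{rem: Lp variants}, since the convexity-based Poincar\'e inequality works in $L_{p}$ for $p\in\sbr{1,\infty}$.
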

\begin{proof}
We write
\[
\tau^{\varepsilon}\op T^{\varepsilon}u-\tau^{\varepsilon}\op S^{\varepsilon}u=\tau^{\varepsilon}\op T^{\varepsilon}\rbr{\op I-\op S^{\varepsilon}}u+\tau^{\varepsilon}\op S^{\varepsilon}\rbr{\op T^{\varepsilon}-\op I}u
\]
(here $\op S^{\varepsilon}\op T^{\varepsilon}$ is understood to
be defined as $\op S^{\varepsilon}\op T^{\varepsilon}=\op T^{\varepsilon}\op S^{\varepsilon}$,
that is, we apply $\op S^{\varepsilon}$ to $\op T^{\varepsilon}u$
regarding the new variable resulting from the operator~$\op T^{\varepsilon}$
as a parameter). Then, it follows from Lemmas~\ref{lem: =0003C4=001D4BT=001D4B}
and~\ref{lem: S=001D4B-I} that
\[
\norm{\tau^{\varepsilon}\op T^{\varepsilon}\rbr{\op I-\op S^{\varepsilon}}u}_{2,\R^{d}\times\cell}\lesssim\varepsilon\norm{D_{1}u}_{2,\R^{d}\times\cell},
\]
while Lemmas~\ref{lem: =0003C4=001D4BS=001D4B} and~\ref{lem: T=001D4B-I}
imply that
\[
\norm{\tau^{\varepsilon}\op S^{\varepsilon}\rbr{\op T^{\varepsilon}-\op I}u}_{2,\R^{d}\times\cell}\lesssim\varepsilon\norm{D_{1}u}_{2,\R^{d}\times\cell}.
\]
These observations combine to give the desired estimate.
\end{proof}
\begin{rem}
\label{rem: Lp variants}We note that the results of Lemmas~\ref{lem: =0003C4=001D4BT=001D4B}\textendash \hairspace\ref{lem: =0003C4=001D4BT=001D4B-=0003C4=001D4BS=001D4B}
persist if we replace the $L_{2}$\nobreakdash-norms by the $L_{p}$\nobreakdash-norms
with~$p\in\sbr{1,\infty}$. This will play a role in what follows.
\end{rem}

\subsection{Correctors}

We define the first corrector~$\op K_{\mu}^{\varepsilon}\colon L_{2}\rbr{\R^{d}}^{n}\to H^{1}\rbr{\R^{d}}^{n}$~by
\begin{equation}
\op K_{\mu}^{\varepsilon}=\tau^{\varepsilon}\op S^{\varepsilon}\op K_{\mu}.\label{def: K=001D4B}
\end{equation}
More explicitly,
\[
\op K_{\mu}^{\varepsilon}f\rbr x=\int_{\cell}N\rbr{x+\varepsilon z,\varepsilon^{-1}x}\hairspace D\rbr{\op A_{\mu}^{0}}^{-1}f\rbr{x+\varepsilon z}\dd z.
\]
Because of the smoothing~$\op S^{\varepsilon}$, this corrector
is bounded with 
\begin{align}
\norm{\op K_{\mu}^{\varepsilon}f}_{2,\R^{d}} & \lesssim\norm f_{2,\R^{d}},\label{est: Norm of K=001D4B}\\
\norm{D\op K_{\mu}^{\varepsilon}f}_{2,\R^{d}} & \lesssim\varepsilon^{-1}\norm f_{2,\R^{d}}.\label{est: Norm of DK=001D4B}
\end{align}
Indeed, using Lemma~\ref{lem: =0003C4=001D4BS=001D4B}, we see
that
\begin{align*}
\norm{\op K_{\mu}^{\varepsilon}f}_{2,\R^{d}} & \le\norm{\op K_{\mu}f}_{2,\R^{d}\times\cell},\\
\norm{D\op K_{\mu}^{\varepsilon}f}_{2,\R^{d}} & \le\norm{D_{1}\op K_{\mu}f}_{2,\R^{d}\times\cell}+\varepsilon^{-1}\norm{D_{2}\op K_{\mu}f}_{2,\R^{d}\times\cell}.
\end{align*}
The~estimates~(\ref{est: Norm of K=001D4B}) and~(\ref{est: Norm of DK=001D4B})
then follow from~(\ref{est: K is bounded}).

While the $L_{2}$\nobreakdash-norm of $\op K_{\mu}^{\varepsilon}f$
is merely uniformly bounded, the $L_{2}$\nobreakdash-norm of $\op S^{\varepsilon}\op K_{\mu}^{\varepsilon}f$
turns out to be of order~$\varepsilon$.
\begin{lem}
\label{lem: S=001D4BK=001D4B}For~any~$\varepsilon\in\set E$ and~$f\in L_{2}\rbr{\R^{d}}^{n}$
we have
\[
\norm{\op S^{\varepsilon}\op K_{\mu}^{\varepsilon}f}_{2,\R^{d}}\lesssim\varepsilon\norm f_{2,\R^{d}}.
\]
\end{lem}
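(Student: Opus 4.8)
The plan is to exploit the fact that $\op K_\mu^\varepsilon = \tau^\varepsilon \op S^\varepsilon \op K_\mu$, so that
\[
\op S^\varepsilon \op K_\mu^\varepsilon f = \op S^\varepsilon \tau^\varepsilon \op S^\varepsilon \op K_\mu f,
\]
and to compare this with $\tau^\varepsilon \op T^\varepsilon \op S^\varepsilon \op K_\mu f = \tau^\varepsilon (\op T^\varepsilon)^* \op T^\varepsilon \op K_\mu f$. The key structural observation is that $\op S^\varepsilon$ and $\tau^\varepsilon$ roughly commute \emph{up to a translation}: since $\op S^\varepsilon u(x,y) = \int_\cell u(x-\varepsilon z, y)\dd z$ and $(\tau^\varepsilon u)(x) = u(x,\varepsilon^{-1}x)$, a change of variables turns $\op S^\varepsilon \tau^\varepsilon$ into an average over $\cell$ of translations that act \emph{simultaneously} on the slow variable and on the fast variable — which, because $N$ is periodic in its second argument, is exactly what $\tau^\varepsilon (\op T^\varepsilon)^*$ produces when applied to $\op T^\varepsilon \op K_\mu f$. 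First I would make this identification precise, writing $\op S^\varepsilon \op K_\mu^\varepsilon f$ as the image under $\tau^\varepsilon$ of a double Steklov average of $\op K_\mu f$, and then reduce the claim to estimating $(\op I - \op T^\varepsilon)$ (or, after one more averaging, $(\op I - \op S^\varepsilon)$) applied to $\op K_\mu f$.

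Once the bookkeeping is done, the estimate follows from the order-$\varepsilon$ smoothing bounds already established. Concretely, the difference $\op S^\varepsilon \op K_\mu^\varepsilon f$ and the \textquotedblleft raw\textquotedblright\ periodic object $\tau^\varepsilon(\op S^\varepsilon \op K_\mu f)$ — or rather their suitable combination — will be expressible through terms of the form $\tau^\varepsilon \op S^\varepsilon (\op T^\varepsilon - \op I)\op K_\mu f$ and $\tau^\varepsilon (\op T^\varepsilon - \op I)\op S^\varepsilon \op K_\mu f$, each of which is controlled by Lemmas~\ref{lem: =0003C4=001D4BS=001D4B}, \ref{lem: T=001D4B-I} and~\ref{lem: S=001D4B-I} (in the $L_p$ form of Remark~\ref{rem: Lp variants}) at the price of a factor $\varepsilon$ times $\norm{D_1 \op K_\mu f}$. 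The latter is bounded by $\norm f_{2,\R^d}$ via~(\ref{est: K is bounded}), so $\norm{\op S^\varepsilon \op K_\mu^\varepsilon f}_{2,\R^d} \lesssim \varepsilon \norm f_{2,\R^d}$, as desired. I would also use that $\op S^\varepsilon$ has operator norm at most~$1$ (Lemma~\ref{lem: =0003C4=001D4BS=001D4B}) to absorb the outer smoothing harmlessly.

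I expect the main obstacle to be the first paragraph: pinning down exactly how $\op S^\varepsilon$ interacts with $\tau^\varepsilon$ and with the periodicity of $N$. Unlike $\op T^\varepsilon$, which is an isometry after composing with $\tau^\varepsilon$ (Lemma~\ref{lem: =0003C4=001D4BT=001D4B}), the composition $\tau^\varepsilon \op S^\varepsilon$ genuinely loses information, and one must be careful that the translation in the slow variable and the translation in the fast variable match up correctly — a factor of $\varepsilon^{-1} \cdot \varepsilon = 1$ in the fast argument is what makes a Steklov average over $\cell$ in $z$ produce a \emph{periodic} average in $y$. A clean way to organize this is to first prove, by the change of variable $x \mapsto x + \varepsilon z$, an identity of the shape $\op S^\varepsilon \tau^\varepsilon u = \tau^\varepsilon \op T^\varepsilon{}^{*} u$ valid for $u$ periodic in its second variable (this is essentially the content of Lemma~\ref{lem: =0003C4=001D4BS=001D4B}'s proof, and I would reuse it), and then the rest is routine. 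Density of $\tilde C_c^\infty(\R^d\times\cell)$ together with the uniform bounds of the operators involved extends the estimate from smooth $f$ (for which $\op K_\mu f$ is smooth enough to apply the lemmas) to all $f\in L_2(\R^d)^n$.
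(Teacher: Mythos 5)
Your proposal has a genuine gap: it never invokes the fact that $N$, and hence $\op K_{\mu}f\rbr{x,\wc}$, has mean value zero over $\cell$ in the second variable. This is the decisive structural input, and without it the claim cannot hold: the best one can get from the smoothing lemmas alone is $\norm{\op S^{\varepsilon}\op K_{\mu}^{\varepsilon}f}_{2,\R^{d}}\lesssim\norm{f}_{2,\R^{d}}$. The quantities you propose to estimate — $\op S^{\varepsilon}\op K_{\mu}^{\varepsilon}f$ against $\tau^{\varepsilon}\rbr{\op S^{\varepsilon}\op K_{\mu}f}=\op K_{\mu}^{\varepsilon}f$, via $\tau^{\varepsilon}\op S^{\varepsilon}\rbr{\op T^{\varepsilon}-\op I}\op K_{\mu}f$ and similar terms — only control a \emph{difference} by $O\rbr{\varepsilon}$; subtracting an object of size $O\rbr 1$ does not yield the claimed $O\rbr{\varepsilon}$ bound on $\op S^{\varepsilon}\op K_{\mu}^{\varepsilon}f$ itself. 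There is also a computational error at the outset: the identity $\tau^{\varepsilon}\op T^{\varepsilon}\op S^{\varepsilon}\op K_{\mu}f=\tau^{\varepsilon}\rbr{\op T^{\varepsilon}}^{*}\op T^{\varepsilon}\op K_{\mu}f$ cannot hold, since $\rbr{\op T^{\varepsilon}}^{*}\op T^{\varepsilon}=\op I$ on $L_{2}\rbr{\R^{d}\times\cell}$, so the right side equals $\tau^{\varepsilon}\op K_{\mu}f$, which is not even a bounded operator on $L_{2}\rbr{\R^{d}}^{n}$.

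The paper's proof is a short direct computation. One writes
\[
\op S^{\varepsilon}\op K_{\mu}^{\varepsilon}f\rbr x=\int_{\cell}\int_{\cell}\op K_{\mu}f\rbr{x+\varepsilon w+\varepsilon z,\varepsilon^{-1}x+z}\dd w\dd z,
\]
using periodicity to replace $\varepsilon^{-1}\rbr{x+\varepsilon z}$ by $\varepsilon^{-1}x+z$. Because $\op K_{\mu}f\rbr{x,\wc}$ is periodic with mean zero, the comparison quantity $\int_{\cell}\int_{\cell}\op K_{\mu}f\rbr{x+\varepsilon w,\varepsilon^{-1}x+z}\dd w\dd z$ vanishes identically — the $z$\nobreakdash-integral is a period average in the fast variable — and inserting it for free produces $\rbr{\op T^{\varepsilon}-\op I}\op K_{\mu}f$ under the integral. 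A change of variables (as in Lemma~\ref{lem: =0003C4=001D4BT=001D4B}) then gives $\norm{\op S^{\varepsilon}\op K_{\mu}^{\varepsilon}f}_{2,\R^{d}}\le\norm{\rbr{\op T^{\varepsilon}-\op I}\op K_{\mu}f}_{2,\R^{d}\times\cell\times\cell}$, and Lemma~\ref{lem: T=001D4B-I} with~(\ref{est: K is bounded}) concludes. Your remark that ``a Steklov average over $\cell$ in $z$ produces a periodic average in $y$'' is pointing at exactly this cancellation, but the argument is incomplete until you state why that periodic average is zero.
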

\begin{proof}
By~definition of $\op S^{\varepsilon}$ and~$\op K_{\mu}^{\varepsilon}$,
\[
\op S^{\varepsilon}\op K_{\mu}^{\varepsilon}f\rbr x=\int_{\cell}\int_{\cell}\op T^{\varepsilon}\op K_{\mu}f\rbr{x+\varepsilon w,\varepsilon^{-1}x+z,z}\dd w\dd z.
\]
Since~$\op K_{\mu}f\rbr{x,\wc}$ is periodic and has mean value zero,
we have
\[
\int_{\cell}\int_{\cell}\op K_{\mu}f\rbr{x+\varepsilon w,\varepsilon^{-1}x+z}\dd w\dd z=0,
\]
and~hence
\[
\op S^{\varepsilon}\op K_{\mu}^{\varepsilon}f\rbr x=\int_{\cell}\int_{\cell}\rbr{\op T^{\varepsilon}-\op I}\op K_{\mu}f\rbr{x+\varepsilon w,\varepsilon^{-1}x+z,z}\dd w\dd z.
\]
Changing variables and keeping in mind that $\op K_{\mu}f$ is periodic
in the second variable, we find that
\[
\norm{\op S^{\varepsilon}\op K_{\mu}^{\varepsilon}f}_{2,\R^{d}}\le\norm{\rbr{\op T^{\varepsilon}-\op I}\op K_{\mu}f}_{2,\R^{d}\times\cell\times\cell}.
\]
The~result is therefore immediate from Lemma~\ref{lem: T=001D4B-I}
and the estimate~(\ref{est: K is bounded}).
\end{proof}
To~describe the second corrector, we need some additional notation.
Let  $\rbr{\op A_{\mu}^{\varepsilon}}^{\dual}$ be the  adjoint
of~$\op A_{\mu}^{\varepsilon}$. Then we construct the effective
operator~$\rbr{\op A_{\mu}^{0}}^{\dual}$, the corrector~$\rbr{\op K_{\mu}^{\varepsilon}}^{\dual}$
and  the other objects (which will be marked with ``$+$'' as well)
for $\rbr{\op A_{\mu}^{\varepsilon}}^{\dual}$ just as we did for~$\op A_{\mu}^{\varepsilon}$.
(It may be noted in passing that $\rbr{\op A_{\mu}^{0}}^{\dual}$
is the  adjoint of~$\op A_{\mu}^{0}$.) Of~course, all results
for $\op A_{\mu}^{\varepsilon}$ will transfer to $\rbr{\op A_{\mu}^{\varepsilon}}^{\dual}$.
We shall not explicitly formulate these results here, but  refer
to them by the numbers of the corresponding statements for $\op A_{\mu}^{\varepsilon}$
with ``$+$'' following the reference (for~example, Lemma~\ref{lem: S=001D4BK=001D4B}\dualref\
and the estimate~(\ref{est: Norm of K=001D4B})\dualref).

Define $\op L_{\mu}\colon L_{2}\rbr{\R^{d}}^{n}\to L_{2}\rbr{\R^{d}}^{n}$~by
\begin{equation}
\op L_{\mu}=\rbr{D_{1}\op K_{\mu}^{\dual}}^{*}A\bigrbr{D_{1}\rbr{\op A_{\mu}^{0}}^{-1}+D_{2}\op K_{\mu}}\label{def: L}
\end{equation}
and~$\op M_{\mu}^{\varepsilon}\colon L_{2}\rbr{\R^{d}}^{n}\to L_{2}\rbr{\R^{d}}^{n}$~by
\begin{equation}
\op M_{\mu}^{\varepsilon}=\varepsilon^{-1}\bigrbr{\tau^{\varepsilon}\op T^{\varepsilon}\bigrbr{D_{1}\rbr{\rbr{\op A_{\mu}^{0}}^{\dual}}^{-1}+D_{2}\op K_{\mu}^{\dual}}}^{*}\tau^{\varepsilon}\sbr{A,\op T^{\varepsilon}}\bigrbr{D_{1}\rbr{\op A_{\mu}^{0}}^{-1}+D_{2}\op K_{\mu}}.\label{def: M=001D4B}
\end{equation}
A~more  convenient way of dealing with these operators is to
look at their forms. If we set $u_{0}=\rbr{\op A_{\mu}^{0}}^{-1}f$,
$U=\op K_{\mu}f$ and~$u_{0}^{\dual}=\rbr{\rbr{\op A_{\mu}^{0}}^{\dual}}^{-1}g$,
$U^{\dual}=\op K_{\mu}^{\dual}g$, then
\[
\rbr{\op L_{\mu}f,g}_{\R^{d}}=\bigrbr{A\rbr{D_{1}u_{0}+D_{2}U},D_{1}U^{\dual}}_{\R^{d}\times\cell}
\]
and
\[
\rbr{\op M_{\mu}^{\varepsilon}f,g}_{\R^{d}}=\varepsilon^{-1}\bigrbr{\tau^{\varepsilon}\sbr{A,\op T^{\varepsilon}}\rbr{D_{1}u_{0}+D_{2}U},\tau^{\varepsilon}\op T^{\varepsilon}\rbr{D_{1}u_{0}^{\dual}+D_{2}U^{\dual}}}_{\R^{d}\times\cell}.
\]
Both $\op L_{\mu}$ and~$\op M_{\mu}^{\varepsilon}$ are bounded.
Indeed,
\[
\abs{\rbr{\op L_{\mu}f,g}_{\R^{d}}}\le\norm{A\rbr{D_{1}u_{0}+D_{2}U}}_{2,\R^{d}\times\cell}\norm{D_{1}U^{\dual}}_{2,\R^{d}\times\cell},
\]
and so, according to the estimates~(\ref{est: Norm of (A=002070-=0003BC)=00207B=0000B9}),
(\ref{est: K is bounded}) and~(\ref{est: K is bounded})\dualref,
\begin{equation}
\norm{\op L_{\mu}f}_{2,\R^{d}}\lesssim\norm f_{2,\R^{d}}.\label{est: L}
\end{equation}
Likewise, observing that $\tau^{\varepsilon}\sbr{A,\op T^{\varepsilon}}=\tau^{\varepsilon}\rbr{\op I-\op T^{\varepsilon}}A\cdot\tau^{\varepsilon}\op T^{\varepsilon}$
(by~the~multiplicativity of $\tau^{\varepsilon}$ and~$\op T^{\varepsilon}$),
we conclude that
\[
\abs{\rbr{\op M_{\mu}^{\varepsilon}f,g}_{\R^{d}}}\le\cellradius\seminorm A_{C^{0,1}}\norm{\tau^{\varepsilon}\op T^{\varepsilon}\rbr{D_{1}u_{0}+D_{2}U}}_{2,\R^{d}\times\cell}\norm{\tau^{\varepsilon}\op T^{\varepsilon}\rbr{D_{1}u_{0}^{\dual}+D_{2}U^{\dual}}}_{2,\R^{d}\times\cell}.
\]
This, together with Lemma~\ref{lem: =0003C4=001D4BT=001D4B} and
the estimates~(\ref{est: Norm of (A=002070-=0003BC)=00207B=0000B9}),
(\ref{est: K is bounded}) and (\ref{est: Norm of (A=002070-=0003BC)=00207B=0000B9})\dualref,
(\ref{est: K is bounded})\dualref, yields that
\begin{equation}
\norm{\op M_{\mu}^{\varepsilon}f}_{2,\R^{d}}\lesssim\norm f_{2,\R^{d}}.\label{est: M=001D4B}
\end{equation}

Now we introduce the second corrector~$\op C_{\mu}^{\varepsilon}\colon L_{2}\rbr{\R^{d}}^{n}\to L_{2}\rbr{\R^{d}}^{n}$~by
\begin{equation}
\op C_{\mu}^{\varepsilon}=\rbr{\op K_{\mu}^{\varepsilon}-\op L_{\mu}}-\op M_{\mu}^{\varepsilon}+\rbr{\rbr{\op K_{\mu}^{\varepsilon}}^{\dual}-\op L_{\mu}^{\dual}}^{*}.\label{def: C=001D4B}
\end{equation}
Then (\ref{est: Norm of K=001D4B}), (\ref{est: L}), (\ref{est: M=001D4B})
and (\ref{est: Norm of K=001D4B})\dualref, (\ref{est: L})\dualref\
 imply that $\op C_{\mu}^{\varepsilon}$ is continuous:
\begin{equation}
\norm{\op C_{\mu}^{\varepsilon}f}_{2,\R^{d}}\lesssim\norm f_{2,\R^{d}}.\label{est: C=001D4B}
\end{equation}

\begin{rem}
\label{rem: Can be removed?}From~(\ref{est: M=001D4B}) we know
that the operator norm of $\op M_{\mu}^{\varepsilon}$ is bounded
uniformly in~$\varepsilon$. In~some situations, we can go further
and prove that
\begin{equation}
\norm{\op M_{\mu}^{\varepsilon}f}_{2,\R^{d}}\lesssim\varepsilon\norm f_{2,\R^{d}}.\label{est: M=001D4B, improved}
\end{equation}
The~term~$\op M_{\mu}^{\varepsilon}$ can then be removed from $\op C_{\mu}^{\varepsilon}$,
because, in the context where the corrector~$\op C_{\mu}^{\varepsilon}$
is needed (see~Theorem~\ref{thm: Approximation with 2d corrector}
below), this term  will be absorbed to the error.

The~estimate~(\ref{est: M=001D4B, improved}) is true, for instance,
if $A\in C^{1,1}\rbr{\bar{\R}^{d};\tilde L_{\infty}\rbr{\cell}}$. To~see
this, we notice that if $u,v\in\tilde H^{1}\rbr{\R^{d}\times\cell}^{d\times n}$,
then $uv\in\tilde W_{1}^{1}\rbr{\R^{d}\times\cell}$ and,  by an $L_{1}$\nobreakdash-variant
of Lemma~\ref{lem: =0003C4=001D4BT=001D4B-=0003C4=001D4BS=001D4B}
(see Remark~\ref{rem: Lp variants}),
\[
\begin{aligned}\hspace{2em} & \hspace{-2em}\bigabs{\bigrbr{\tau^{\varepsilon}\rbr{\op I-\op T^{\varepsilon}}A\cdot\tau^{\varepsilon}\op T^{\varepsilon}u,\tau^{\varepsilon}\op T^{\varepsilon}v}_{\R^{d}\times\cell}-\bigrbr{\tau^{\varepsilon}\rbr{\op I-\op S^{\varepsilon}}A\cdot\tau^{\varepsilon}\op T^{\varepsilon}u,\tau^{\varepsilon}\op T^{\varepsilon}v}_{\R^{d}\times\cell}}\\
 & \le\varepsilon\cellradius\seminorm A_{C^{0,1}}\norm{\tau^{\varepsilon}\op T^{\varepsilon}u\bar v-\tau^{\varepsilon}\op S^{\varepsilon}u\bar v}_{1,\R^{d}\times\cell}\lesssim\varepsilon^{2}\norm{D_{1}u\bar v}_{1,\R^{d}\times\cell}
\end{aligned}
\]
(we have reversed the order of integration to pass from $\op S^{\varepsilon}$
to $\op T^{\varepsilon}$ in the second term on the left). This means
that we may replace the function~$\tau^{\varepsilon}\rbr{\op I-\op T^{\varepsilon}}A$
in $\op M_{\mu}^{\varepsilon}$ by $\tau^{\varepsilon}\rbr{\op I-\op S^{\varepsilon}}A$
with error being of order~$\varepsilon$. But since $A\in C^{1,1}\rbr{\bar{\R}^{d};\tilde L_{\infty}\rbr{\cell}}$,
 an $L_{\infty}$\nobreakdash-variant of the second estimate in
Lemma~\ref{lem: S=001D4B-I} (again see Remark~\ref{rem: Lp variants})
will imply that $\op M_{\mu}^{\varepsilon}$ itself is of order~$\varepsilon$.
Another example when (\ref{est: M=001D4B, improved}) holds is the
case where the fast and slow variables  are separated, that is,
$A^{\varepsilon}\rbr x=A\rbr{x_{1},\varepsilon^{-1}x_{2}}$ with~$x=\rbr{x_{1},x_{2}}$.
Since only the rapid oscillations must be regularized, we may choose
$\op T^{\varepsilon}$ to be the translation operator in the  variable~$x_{2}$:
\[
\op T^{\varepsilon}u\rbr{x,y}\rbr{z_{2}}=u\rbr{x_{1},x_{2}+\varepsilon z_{2},y}.
\]
Then $\rbr{\op I-\op T^{\varepsilon}}A$ is  identically zero. Operators
with such coefficients have been studied in~\cite{Se:2017-2}.
\end{rem}
\begin{rem}
\label{rem: Cannot be removed}Given the previous remark, it may
be tempting to conjecture that (\ref{est: M=001D4B, improved}) holds
for all~$A\in C^{0,1}\rbr{\bar{\R}^{d};\tilde L_{\infty}\rbr{\cell}}$.
However, this is not the case, as the following example shows. 
Define
\[
\chi\rbr x=\sum_{k\in\N}k^{-2}\cos2^{k}\pi x.
\]
Then $\chi$ is uniformly continuous, but does not satisfy a H\"{o}lder
condition of any order at all points (see~\cite[Section~4]{H:1916}
for details). Let $A_{1}$ be a uniformly positive definite  Lipschitz
function on $\R$ whose derivative equals $\chi$ on $\rbr{0,1}$
and is $0$ off~$\rbr{0,1}$, and let  $A_{2}\rbr y=4\pi^{1/2}\rbr{2+\sin2\pi y}^{-1}$.
Set~$A\rbr{x,y}=A_{1}\rbr x\hairspace A_{2}\rbr y$. Select an~$f\in L_{2}\rbr{\R}$
in such a way that $\abs{Du_{0}}^{2}=1$ on~$\rbr{0,1}$. It~is
a straightforward, yet tedious, calculation to see that
\[
\rbr{\op M_{\mu}^{\varepsilon_{k}}f,f}_{\R}=\rbr{\log_{2}\varepsilon_{k}^{-1}}^{-2}+O\rbr{\varepsilon_{k}},\qquad k\to\infty,
\]
where~$\varepsilon_{k}=2^{-k}$. In~fact, for any monotone function~$\zeta\in C\rbr{\sbr{0,1}}$
that satisfies $\zeta\rbr 0=0$ and~$\zeta\rbr{\varepsilon}\ge\varepsilon$,
we can construct a uniformly elliptic  operator~$\op A^{\varepsilon}$
on $H^{1}\rbr{\R}$ and find  a sequence~$\cbr{\varepsilon_{k}}_{k\in\N}$
converging to $0$ such that
\[
\rbr{\op M_{\mu}^{\varepsilon_{k}}f,f}_{\R}=\zeta\rbr{\varepsilon_{k}}+O\rbr{\varepsilon_{k}},\qquad k\to\infty,
\]
for~some~$f\in L_{2}\rbr{\R}$. The~idea is to  adjust gaps in
the Fourier series for~$\chi$.
\end{rem}
\begin{rem}
We observe that $\op L_{\mu}$ can be written in the form
\[
\op L_{\mu}=\rbr{\op A_{\mu}^{0}}^{-1}D^{*}\op LD\rbr{\op A_{\mu}^{0}}^{-1},
\]
where~$\op L\colon H^{1}\rbr{\R^{d}}^{n}\to L_{2}\rbr{\R^{d}}^{n}$
is a first-order differential operator with bounded coefficients:
\[
\op L=\int_{\cell}N^{\dual}\rbr{\wc,y}^{*}D_{1}^{*}A\rbr{\wc,y}\hairspace\rbr{I+D_{2}N\rbr{\wc,y}}\dd y
\]
(cf.~\cite[Remark~2.6]{Se:2017-2}). Likewise, we can write $\op M_{\mu}^{\varepsilon}$~as
\[
\op M_{\mu}^{\varepsilon}=\rbr{\op A_{\mu}^{0}}^{-1}D^{*}M_{\varepsilon}D\rbr{\op A_{\mu}^{0}}^{-1}
\]
where~$M_{\varepsilon}$ is the bounded function given~by
\[
M_{\varepsilon}\rbr x=\varepsilon^{-1}\int_{\cell}\rbr{I+D_{2}N^{\dual}\rbr{x,\varepsilon^{-1}x+z}}^{*}\findiff_{\varepsilon z}A\rbr{x,\varepsilon^{-1}x+z}\hairspace\rbr{I+D_{2}N\rbr{x,\varepsilon^{-1}x+z}}\dd z
\]
with~$\findiff_{\varepsilon z}A\rbr{x,y}=A\rbr{x+\varepsilon z,y}-A\rbr{x,y}$.
\end{rem}

\section{\label{sec: Main results}Main results}

Now we formulate the main results of the  paper.
\begin{thm}
\label{thm: Convergence and Approximation with 1st corrector}If~$\mu\notin\set S$,
then for any $\varepsilon\in\set E$ and~$f\in L_{2}\rbr{\R^{d}}^{n}$
we have 
\begin{align}
\norm{\rbr{\op A_{\mu}^{\varepsilon}}^{-1}f-\rbr{\op A_{\mu}^{0}}^{-1}f}_{2,\R^{d}} & \lesssim\varepsilon\norm f_{2,\R^{d}},\label{est: Convergence}\\
\norm{D\rbr{\op A_{\mu}^{\varepsilon}}^{-1}f-D\rbr{\op A_{\mu}^{0}}^{-1}f-\varepsilon D\op K_{\mu}^{\varepsilon}f}_{2,\R^{d}} & \lesssim\varepsilon\norm f_{2,\R^{d}}.\label{est: Approximation with 1st corrector}
\end{align}
The~estimates are sharp with respect to the order, and the constants
depend only on the parameters~$d$, $n$, $\mu$, the norm~$\norm A_{C^{0,1}}$
and the constants~$c_{A}$ and~$C_{A}$ in the coercivity bound.
\end{thm}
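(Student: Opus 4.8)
The plan is to deduce both estimates~(\ref{est: Convergence}) and~(\ref{est: Approximation with 1st corrector}) from a single bound on an approximate resolvent identity. Put $u_{0}=\rbr{\op A_{\mu}^{0}}^{-1}f$ and $v_{\varepsilon}=u_{0}+\varepsilon\op K_{\mu}^{\varepsilon}f$, which lies in $H^{1}\rbr{\R^{d}}^{n}$ by~(\ref{est: Norm of (A=002070-=0003BC)=00207B=0000B9}) and~(\ref{est: Norm of K=001D4B}). Since $\op A_{\mu}^{\varepsilon}\colon H^{1}\to H^{-1}$ is an isomorphism, $\rbr{\op A_{\mu}^{\varepsilon}}^{-1}f-v_{\varepsilon}=\rbr{\op A_{\mu}^{\varepsilon}}^{-1}\rbr{f-\op A_{\mu}^{\varepsilon}v_{\varepsilon}}$, so by the a~priori bound~(\ref{est: Norm of (A=001D4B-=0003BC)=00207B=0000B9}) it suffices to prove $\norm{f-\op A_{\mu}^{\varepsilon}v_{\varepsilon}}_{-1,2,\R^{d}}\lesssim\varepsilon\norm f_{2,\R^{d}}$: then (\ref{est: Approximation with 1st corrector}) is the gradient part of $\norm{\rbr{\op A_{\mu}^{\varepsilon}}^{-1}f-v_{\varepsilon}}_{1,2,\R^{d}}\lesssim\varepsilon\norm f_{2,\R^{d}}$, and (\ref{est: Convergence}) follows after adding $\norm{\varepsilon\op K_{\mu}^{\varepsilon}f}_{2,\R^{d}}\lesssim\varepsilon\norm f_{2,\R^{d}}$ from~(\ref{est: Norm of K=001D4B}). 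Testing against $\varphi\in H^{1}\rbr{\R^{d}}^{n}$ and using $\rbr{f,\varphi}_{\R^{d}}=\bigrbr{A^{0}Du_{0},D\varphi}_{\R^{d}}-\mu\rbr{u_{0},\varphi}_{\R^{d}}$, and noting that $\mu\varepsilon\rbr{\op K_{\mu}^{\varepsilon}f,\varphi}_{\R^{d}}$ is harmless by~(\ref{est: Norm of K=001D4B}), the task reduces to
\[
\bigabs{\bigrbr{g_{0}-A^{\varepsilon}Dv_{\varepsilon},D\varphi}_{\R^{d}}}\lesssim\varepsilon\norm f_{2,\R^{d}}\norm{D\varphi}_{2,\R^{d}},\qquad g_{0}=A^{0}Du_{0}.
\]

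First I would strip the Steklov smoothing off $A^{\varepsilon}Dv_{\varepsilon}$. Differentiating $\op K_{\mu}^{\varepsilon}f=\tau^{\varepsilon}\op S^{\varepsilon}\op K_{\mu}f$ and using that $\op S^{\varepsilon}$ commutes with $D_{1}$ and $D_{2}$ gives $\varepsilon D\op K_{\mu}^{\varepsilon}f=\tau^{\varepsilon}\op S^{\varepsilon}D_{2}\op K_{\mu}f+\varepsilon\tau^{\varepsilon}\op S^{\varepsilon}D_{1}\op K_{\mu}f$. With $\op K_{\mu}f=NDu_{0}$ and $w:=\rbr{I+D_{2}N}Du_{0}=Du_{0}+D_{2}\op K_{\mu}f$, and since $Du_{0}$ does not depend on the fast variable,
\[
A^{\varepsilon}Dv_{\varepsilon}=A^{\varepsilon}\tau^{\varepsilon}\op S^{\varepsilon}w+A^{\varepsilon}\rbr{\op I-\op S^{\varepsilon}}Du_{0}+\varepsilon A^{\varepsilon}\tau^{\varepsilon}\op S^{\varepsilon}D_{1}\op K_{\mu}f.
\]
The last two terms are $O\rbr{\varepsilon}$ in $L_{2}\rbr{\R^{d}}^{n}$: the first by the second estimate in Lemma~\ref{lem: S=001D4B-I} (extended to $H^{1}$ by density) and~(\ref{est: Norm of (A=002070-=0003BC)=00207B=0000B9}); the second since $\norm{D_{1}\op K_{\mu}f}_{2,\R^{d}\times\cell}\lesssim\norm f_{2,\R^{d}}$ by~(\ref{est: K is bounded}) and Lemma~\ref{lem: =0003C4=001D4BS=001D4B}. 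For the leading term I would exploit the Lipschitz continuity of $A$ in the first variable: $A^{\varepsilon}\tau^{\varepsilon}\op S^{\varepsilon}w-\tau^{\varepsilon}\op S^{\varepsilon}\rbr{Aw}$ is the average over $\cell$ of $\tau^{\varepsilon}\sbr{A,\op T^{\varepsilon}}w=\tau^{\varepsilon}\rbr{\op I-\op T^{\varepsilon}}A\cdot\tau^{\varepsilon}\op T^{\varepsilon}w$ (the identity preceding~(\ref{est: M=001D4B})), so from $\norm{\tau^{\varepsilon}\rbr{\op I-\op T^{\varepsilon}}A}_{\infty,\R^{d}\times\cell}\le\varepsilon\cellradius\seminorm A_{C^{0,1}}$ and Lemma~\ref{lem: =0003C4=001D4BT=001D4B} one gets $\norm{A^{\varepsilon}\tau^{\varepsilon}\op S^{\varepsilon}w-\tau^{\varepsilon}\op S^{\varepsilon}\rbr{Aw}}_{2,\R^{d}}\lesssim\varepsilon\norm w_{2,\R^{d}\times\cell}\lesssim\varepsilon\norm f_{2,\R^{d}}$. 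Since $G:=Aw=A\rbr{I+D_{2}N}Du_{0}$ is exactly the two-scale flux, this yields $g_{0}-A^{\varepsilon}Dv_{\varepsilon}=g_{0}-\tau^{\varepsilon}\op S^{\varepsilon}G+O\rbr{\varepsilon}_{L_{2}}$.

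It remains to handle $\bigrbr{g_{0}-\tau^{\varepsilon}\op S^{\varepsilon}G,D\varphi}_{\R^{d}}$. By~(\ref{def: N}) and~(\ref{def: Coefficient A=002070}), for each $x$ the field $H\rbr{x,\wc}:=G\rbr{x,\wc}-g_{0}\rbr x$ is periodic, has mean value zero and satisfies $D_{2}^{*}H\rbr{x,\wc}=0$; moreover $H,D_{1}H\in L_{2}\rbr{\R^{d};\tilde L_{2}\rbr{\cell}}$ with norms $\lesssim\norm f_{2,\R^{d}}$ by Lemma~\ref{lem: N is Lipschitz} and~(\ref{est: Norm of (A=002070-=0003BC)=00207B=0000B9}). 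Writing $g_{0}-\tau^{\varepsilon}\op S^{\varepsilon}G=\rbr{\op I-\op S^{\varepsilon}}g_{0}-\tau^{\varepsilon}\op S^{\varepsilon}H$, the first piece is $O\rbr{\varepsilon}$ in $L_{2}\rbr{\R^{d}}^{n}$ by Lemma~\ref{lem: S=001D4B-I}, since $g_{0}=A^{0}Du_{0}\in H^{1}\rbr{\R^{d}}^{n}$. For the second piece I would introduce, for each $x$, the classical periodic potential $\Phi=\cbr{\Phi_{klm}}$ with $\Phi_{klm}=-\Phi_{lkm}$, $H_{km}=\sum_{l}\partial_{y_{l}}\Phi_{klm}$, and $\norm{\Phi}_{L_{2}\rbr{\R^{d};\tilde H^{1}\rbr{\cell}}}+\norm{D_{1}\Phi}_{L_{2}\rbr{\R^{d};\tilde H^{1}\rbr{\cell}}}\lesssim\norm f_{2,\R^{d}}$ (as $\Phi\rbr{x,\wc}$ is a bounded linear function of $H\rbr{x,\wc}$). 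The chain rule gives $\tau^{\varepsilon}\op S^{\varepsilon}\rbr{\partial_{y_{l}}\Phi_{klm}}=\varepsilon\hairspace\partial_{x_{l}}\bigrbr{\tau^{\varepsilon}\op S^{\varepsilon}\Phi_{klm}}-\varepsilon\hairspace\tau^{\varepsilon}\op S^{\varepsilon}\rbr{\partial_{x_{l}}\Phi_{klm}}$ (first for smooth $\Phi$, then by density). Pairing $\tau^{\varepsilon}\op S^{\varepsilon}H$ with $D\varphi$ for $\varphi\in C_{c}^{\infty}\rbr{\R^{d}}^{n}$ and integrating by parts, the $\partial_{x_{l}}\bigrbr{\tau^{\varepsilon}\op S^{\varepsilon}\Phi_{klm}}$ contribution is thrown onto $\partial_{x_{l}}\partial_{x_{k}}\varphi_{m}$ and vanishes upon summation in $k,l$ by the skew-symmetry of $\Phi$, while the remainder is $\lesssim\varepsilon\hairspace\norm{\tau^{\varepsilon}\op S^{\varepsilon}D_{1}\Phi}_{2,\R^{d}}\norm{D\varphi}_{2,\R^{d}}\lesssim\varepsilon\norm f_{2,\R^{d}}\norm{D\varphi}_{2,\R^{d}}$ by Lemma~\ref{lem: =0003C4=001D4BS=001D4B}. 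Density in $H^{1}\rbr{\R^{d}}^{n}$ then gives the bound for all $\varphi$, which finishes the estimate of $f-\op A_{\mu}^{\varepsilon}v_{\varepsilon}$, hence~(\ref{est: Convergence}) and~(\ref{est: Approximation with 1st corrector}); sharpness of the order is already seen in the one-dimensional periodic case.

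The hard part will be the tension between two facts pulling in opposite directions: the two-scale flux $G$ is solenoidal in the \emph{fast} variable (so trading a slow derivative for a fast one via the skew potential is the only way to produce a power of~$\varepsilon$), whereas the effective coefficient~$A^{0}$, and with it $u_{0}$ and $G$, are merely Lipschitz, respectively $H^{1}$, in the \emph{slow} variable. One has to check that the potential $\Phi$ still carries an $L_{2}$ weak gradient in the slow variable \textendash{} this is precisely where the Lipschitz regularity of $N$ from Lemma~\ref{lem: N is Lipschitz} enters \textendash{} and that the chain-rule identity for $\tau^{\varepsilon}\op S^{\varepsilon}$ survives the density argument, since for $d\ge2$ pointwise evaluation in the fast variable is unavailable and $\tau^{\varepsilon}\op S^{\varepsilon}$ must be handled through Lemmas~\ref{lem: =0003C4=001D4BT=001D4B} and~\ref{lem: =0003C4=001D4BS=001D4B}.
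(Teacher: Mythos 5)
Your proposal is correct, and it takes a genuinely different route from the paper. Both strategies ultimately reduce to the same quantity: you bound $\norm{f-\op A_\mu^\varepsilon v_\varepsilon}_{H^{-1}}$ directly and apply the a~priori estimate~(\ref{est: Norm of (A=001D4B-=0003BC)=00207B=0000B9}); the paper instead pairs the discrepancy against the adjoint resolvent $u_\varepsilon^\dual=\rbr{\rbr{\op A_\mu^\varepsilon}^*}^{-1}g$, which by duality is $\bigrbr{f-\op A_\mu^\varepsilon v_\varepsilon,u_\varepsilon^\dual}_{\R^d}$, and expands this into the identity~(\ref{eq: The identity}). The essential difference is how the two proofs neutralize the dangerous $\varepsilon^{-1}$-singular term coming from the fast derivative. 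You use the classical skew-symmetric flux corrector $\Phi$ — writing the mean-zero, fast-divergence-free field $H=A\rbr{I+D_2N}Du_0-A^0Du_0$ as $D_2^*\Phi$ with $\Phi_{klm}=-\Phi_{lkm}$ — so that, after the chain rule for $\tau^\varepsilon\op S^\varepsilon$, the leading piece is killed by skew-symmetry against the symmetric Hessian $\partial_{x_l}\partial_{x_k}\varphi_m$, leaving an $\varepsilon\norm{D_1\Phi}$ error. The paper instead proves and applies Lemma~\ref{lem: Differentiation of =0003C4=0003B5T=0003B5F}, which shows that for a fast-divergence-free field $F$ one has $D_1^*\tau^\varepsilon\op T^\varepsilon F=\tau^\varepsilon\op T^\varepsilon D_1^*F$, so the singular $\varepsilon^{-1}\tau^\varepsilon\op T^\varepsilon D_2^*F$ contribution never appears. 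Both devices are Fourier-theoretic incarnations of the same structural fact; yours is the older, more visual one (flux corrector), while the paper's commutation identity is leaner and, crucially, produces the identity~(\ref{eq: The identity}), which is reused wholesale in the proof of Theorem~\ref{thm: Approximation with 2d corrector}. Your approach would require additional work to reach the second-order estimate.

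Two small remarks. First, in estimating $A^\varepsilon\rbr{\op I-\op S^\varepsilon}Du_0$ you cite the \emph{second} estimate of Lemma~\ref{lem: S=001D4B-I}; you need the \emph{first} one ($\varepsilon\norm{D_1\wc}$), applied to $Du_0\in H^1\rbr{\R^d}^{d\times n}$, since $u_0$ is only in~$H^2$. Second, the regularity claim $\norm{\Phi}_{L_2\rbr{\R^d;\tilde H^1\rbr{\cell}}}+\norm{D_1\Phi}_{L_2\rbr{\R^d;\tilde H^1\rbr{\cell}}}\lesssim\norm f_{2,\R^d}$ does hold, but deserves the explicit observation that $\Phi\rbr{x,\wc}$ is the image of $H\rbr{x,\wc}$ under a fixed bounded linear map $L_2\rbr{\cell}\to\tilde H^1\rbr{\cell}$, so that the slow regularity of $\Phi$ is exactly the slow regularity of $H$, which you correctly trace back to Lemma~\ref{lem: N is Lipschitz}, the Lipschitz bound on $A$, and the $H^2$ bound~(\ref{est: Norm of (A=002070-=0003BC)=00207B=0000B9}).
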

\begin{thm}
\label{thm: Approximation with 2d corrector}If~$\mu\notin\set S$,
then for any $\varepsilon\in\set E$ and~$f\in L_{2}\rbr{\R^{d}}^{n}$
it holds that 
\begin{equation}
\norm{\rbr{\op A_{\mu}^{\varepsilon}}^{-1}f-\rbr{\op A_{\mu}^{0}}^{-1}f-\varepsilon\op C_{\mu}^{\varepsilon}f}_{2,\R^{d}}\lesssim\varepsilon^{2}\norm f_{2,\R^{d}}.\label{est: Approximation with 2d corrector}
\end{equation}
The~estimate is sharp with respect to the order, and the constant
depends only on the parameters~$d$, $n$, $\mu$, the norm~$\norm A_{C^{0,1}}$
and the constants~$c_{A}$ and~$C_{A}$ in the coercivity bound.
\end{thm}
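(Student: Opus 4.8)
The plan is to derive, as announced in the introduction, a single identity that represents the difference $\op D_\mu^\varepsilon := \rbr{\op A_\mu^\varepsilon}^{-1} - \rbr{\op A_\mu^0}^{-1} - \varepsilon\op C_\mu^\varepsilon$ in a form to which the uniform bounds already at our disposal apply, and then to read off~\eqref{est: Approximation with 2d corrector}. Since $\norm{\op D_\mu^\varepsilon f}_{2,\R^d}=\sup_{\norm g_{2,\R^d}\le1}\bigabs{\rbr{\op D_\mu^\varepsilon f,g}_{\R^d}}$, it is enough to estimate the bilinear form $\rbr{\op D_\mu^\varepsilon f,g}_{\R^d}$, and it is here that the dual objects $\rbr{\op A_\mu^\varepsilon}^\dual$, $\op K_\mu^\dual$, $\op L_\mu^\dual$ enter: the extra power of $\varepsilon$ over the first-order estimate is bought by expanding the solution of the adjoint problem $\rbr{\rbr{\op A_\mu^\varepsilon}^\dual}^{-1}g$ in exactly the same way as the solution of the direct problem.

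I would fix $f,g\in L_2\rbr{\R^d}^n$, set $u_0=\rbr{\op A_\mu^0}^{-1}f$, $U=\op K_\mu f=ND_1u_0$ and, dually, $v_0=\rbr{\rbr{\op A_\mu^0}^\dual}^{-1}g$, $V=\op K_\mu^\dual g$ (recall that $u_0,v_0\in H^2\rbr{\R^d}^n$ and that $U$, $V$, $D_1D_2U$, $D_1D_2V$ lie in $L_2\rbr{\R^d\times\cell}$, all with norms controlled by $\norm f_{2,\R^d}$, $\norm g_{2,\R^d}$, by Lemma~\ref{lem: N is Lipschitz}), and form the two-scale ans\"atze $\hat u_\varepsilon=u_0+\varepsilon\tau^\varepsilon\op S^\varepsilon U$, $\hat v_\varepsilon=v_0+\varepsilon\tau^\varepsilon\op S^\varepsilon V$, so that $\hat u_\varepsilon-u_0=\varepsilon\op K_\mu^\varepsilon f$ and $D\rbr{\varepsilon\tau^\varepsilon\op S^\varepsilon U}=\varepsilon\tau^\varepsilon\op S^\varepsilon D_1U+\tau^\varepsilon\op S^\varepsilon D_2U$. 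Peeling off the corrector on both sides should reduce the bilinear form to
\[
\rbr{\op D_\mu^\varepsilon f,g}_{\R^d}=\rbr{u_0,g}_{\R^d}-\rbr{\op A_\mu^\varepsilon\hat u_\varepsilon,\hat v_\varepsilon}_{\R^d}+\varepsilon\rbr{\op L_\mu f,g}_{\R^d}+\varepsilon\rbr{\op M_\mu^\varepsilon f,g}_{\R^d}+\varepsilon\bigrbr{\rbr{\op L_\mu^\dual}^*f,g}_{\R^d}+\op E_\varepsilon,
\]
where $\bigabs{\op E_\varepsilon}\lesssim\varepsilon^2\norm f_{2,\R^d}\norm g_{2,\R^d}$ is the cost of replacing the adjoint solution $\rbr{\rbr{\op A_\mu^\varepsilon}^\dual}^{-1}g$ by $\hat v_\varepsilon$ — an $O(\varepsilon^2)$ quantity because the ansatz discrepancy $f-\op A_\mu^\varepsilon\hat u_\varepsilon$ is $O(\varepsilon)$ in $H^{-1}$ while, by Theorem~\ref{thm: Convergence and Approximation with 1st corrector}\dualref, the correction to $\hat v_\varepsilon$ is $O(\varepsilon)$ in $H^1$; in assembling this identity the terms $\bigrbr{\rbr{\rbr{\op K_\mu^\varepsilon}^\dual}^*f,g}_{\R^d}$ produced by the corrector in $\hat u_\varepsilon$ and by the definition~\eqref{def: C=001D4B} of $\op C_\mu^\varepsilon$ cancel.

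It then remains to expand $\rbr{\op A_\mu^\varepsilon\hat u_\varepsilon,\hat v_\varepsilon}_{\R^d}=\rbr{A^\varepsilon D\hat u_\varepsilon,D\hat v_\varepsilon}_{\R^d}-\mu\rbr{\hat u_\varepsilon,\hat v_\varepsilon}_{\R^d}$ using $A^\varepsilon=\tau^\varepsilon A$ and the formula above for $D\hat u_\varepsilon$, $D\hat v_\varepsilon$. The cell equation~\eqref{def: N} — which forces $\bigrbr{A\rbr{D_1u_0+D_2U},D_2W}_{\R^d\times\cell}=0$ for every periodic $W$ — together with the definition of $A^0$ will make the terms of order $1$ cancel against $\rbr{u_0,g}_{\R^d}$; the terms of order $\varepsilon$ should be of exactly three kinds: those in which the slow derivative $D_1$ falls on a corrector, which after passing from $\tau^\varepsilon\op S^\varepsilon$ to the identity at cost $O(\varepsilon^2)$ (Lemma~\ref{lem: S=001D4B-I} and the $H^2$-bounds) reproduce $-\varepsilon\op L_\mu$ and $-\varepsilon\rbr{\op L_\mu^\dual}^*$; the commutator terms $\tau^\varepsilon\sbr{A,\op T^\varepsilon}$, forced upon us because $A^\varepsilon$ does not commute with the smoothing and which by the multiplicativity of $\tau^\varepsilon$ and $\op T^\varepsilon$ assemble into $-\varepsilon\op M_\mu^\varepsilon$ (the switch from $\op T^\varepsilon$ to $\op S^\varepsilon$ inside them costing only $O(\varepsilon)$ by the lemma comparing the two smoothings); and honest remainders of order $\varepsilon^2$ — among them the $\mu$-terms $\mu\varepsilon\rbr{\tau^\varepsilon\op S^\varepsilon U,\hat v_\varepsilon}_{\R^d}$, $\mu\varepsilon\rbr{u_0,\tau^\varepsilon\op S^\varepsilon V}_{\R^d}$, which are $O(\varepsilon^2)$ because $U$ and $V$ have mean value zero (cf.\ Lemma~\ref{lem: S=001D4BK=001D4B}), the contribution of $\rbr{\op S^\varepsilon-\op I}$ hitting the slow factors, second differences of $A$, and products of quantities already estimated. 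Matching against~\eqref{def: C=001D4B}, all the order-$\varepsilon$ contributions cancel and one is left with $\bigabs{\rbr{\op D_\mu^\varepsilon f,g}_{\R^d}}\lesssim\varepsilon^2\norm f_{2,\R^d}\norm g_{2,\R^d}$, which is~\eqref{est: Approximation with 2d corrector}; sharpness of the order is the content of Remark~\ref{rem: Cannot be removed}.

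The routine parts are the bookkeeping of the order-$1$ terms and of the genuinely $O(\varepsilon^2)$ remainders; the delicate point will be the commutator $\tau^\varepsilon\sbr{A,\op T^\varepsilon}$. Because $A$ is only Lipschitz in the first variable, this commutator is of order $\varepsilon$ — and no better — in the operator norm, so it cannot be absorbed into the error and must be carried along as $\op M_\mu^\varepsilon$; the hard part will be to verify that what survives after subtracting exactly $\varepsilon\op M_\mu^\varepsilon$ is truly of order $\varepsilon^2$. I expect this to need careful handling of the two smoothings — recall that $\tau^\varepsilon\op T^\varepsilon$ is an isometry on periodic functions (Lemma~\ref{lem: =0003C4=001D4BT=001D4B}) while $\tau^\varepsilon\op S^\varepsilon$ is merely a contraction (Lemma~\ref{lem: =0003C4=001D4BS=001D4B}) — together with the full $H^1$-regularity, in \emph{both} variables, of $\op K_\mu$ and $\op K_\mu^\dual$ and the $H^2$-regularity of $u_0$ and $v_0$. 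A secondary but essential technical burden will be to organize the two-sided (``double corrector'') expansion so that the adjoint-type summand $\bigrbr{\rbr{\op K_\mu^\varepsilon}^\dual-\op L_\mu^\dual}^*$ of~\eqref{def: C=001D4B} appears of its own accord rather than being inserted by hand.
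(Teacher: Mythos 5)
Your proposal is correct and follows essentially the same route as the paper: a two-scale expansion on both sides (direct ansatz and adjoint solution), cancellation of the $O(1)$ terms via the cell equation and the definition of $A^0$, identification of the surviving $O(\varepsilon)$ terms with $\op L_\mu$, $\op M_\mu^\varepsilon$ and the adjoint-type summand of~\eqref{def: C=001D4B}, and the observation that the remaining bilinear form pairs the $O(\varepsilon)$ ansatz discrepancy in $H^{-1}$ against the $O(\varepsilon)$ correction to the adjoint solution in $H^1$, which together with Theorem~\ref{thm: Convergence and Approximation with 1st corrector}\dualref\ yields the $\varepsilon^2$ bound. The paper organizes this slightly differently — it retains the exact adjoint solution $u_\varepsilon^\dual$ in the key identity, rewrites $\op C_\mu^\varepsilon$ so that subtraction replaces $u_\varepsilon^\dual$ by $u_\varepsilon^\dual - u_0^\dual - \varepsilon\rbr{\op K_\mu^\varepsilon}^\dual g$, and relies on the integration-by-parts lemma for divergence-free periodic fields (Lemma~\ref{lem: Differentiation of =0003C4=0003B5T=0003B5F}) to transfer $D_1$ off the $(\op T^\varepsilon-\op I)$-factor — but these are the same ideas in a different bookkeeping.
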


\begin{rem}
\label{rem: Correctors in periodic case}These results should
be compared with those in~\cite{Se:2017-2}. Suppose that $A^{\varepsilon}$
is periodic, that is, $A^{\varepsilon}\rbr x=A\rbr{x_{1},\varepsilon^{-1}x_{2}}$,
where~$x=\rbr{x_{1},x_{2}}$. In~\cite{Se:2017-2} we  proved estimates
similar to (\ref{est: Convergence})\textendash (\ref{est: Approximation with 2d corrector}),
but with different correctors in (\ref{est: Approximation with 1st corrector})
and~(\ref{est: Approximation with 2d corrector}). The~difference
stems from the smoothing operator. As~mentioned earlier, in the
periodic case we may reduce $\op T^{\varepsilon}$ to the translation
operator in the  variable~$x_{2}$. Then $\op S^{\varepsilon}$
 will involve averaging over~$\varepsilon\cell$, with $\cell$
being the basic cell for the lattice of periods (not necessarily
of full rank).  The~Gelfand transform provides another smoothing
that is, in a sense, dual to the first one and  involves averaging
over the dual cell~$\varepsilon^{-1}\cell^{*}$ in the reciprocal
space. (Here $\cell^{*}$ is the Wigner\textendash Seitz cell in
the dual lattice.) It~is this last smoothing that appeared in~\cite{Se:2017-2}.
One can verify directly that either of these may be used in the
corrector~$\op K_{\mu}^{\varepsilon}$.  As~for~$\op L_{\mu}$
and~$\op M_{\mu}^{\varepsilon}$, the former does not depend on smoothing
and is just the same as in~\cite{Se:2017-2}, and the latter is
zero by the choice of~$\op T^{\varepsilon}$ (see~Remark~\ref{rem: Can be removed?}).
\end{rem}

\begin{rem}
Theorems~\ref{thm: Convergence and Approximation with 1st corrector}
and~\ref{thm: Approximation with 2d corrector} can be extended
to allow  all $\mu\notin\spec\op A^{0}$, though  it
may be necessary to replace $\set E$ by a smaller set~$\set E_{\mu}$
depending on~$\mu$. Indeed, the proofs of the theorems go over
without change to the case $\mu\notin\spec\op A^{0}$ provided we
establish estimates similar to (\ref{est: Norm of (A=001D4B-=0003BC)=00207B=0000B9})
and~(\ref{est: Norm of (A=002070-=0003BC)=00207B=0000B9}). By~the~first
resolvent identity, this amounts to checking  that $\op A_{\mu}^{\varepsilon}$
 as an operator on $L_{2}\rbr{\R^{d}}^{n}$ has a uniformly bounded
inverse. Suppose that $\mu\in\set S$ (otherwise~$\set E_{\mu}=\set E$).
We know from Theorem~\ref{thm: Convergence and Approximation with 1st corrector}
that if $\nu\notin\set S$, then
\[
\norm{\rbr{\op A_{\nu}^{\varepsilon}}^{-1}f-\rbr{\op A_{\nu}^{0}}^{-1}f}_{2,\R^{d}}\le C_{\nu}\varepsilon\norm f_{2,\R^{d}}
\]
for~all~$\varepsilon\in\set E$ and~$f\in L_{2}\rbr{\R^{d}}^{n}$.
Therefore, using the identity
\[
\begin{aligned}\rbr{\op A_{\mu}^{\varepsilon}}^{-1}-\rbr{\op A_{\mu}^{0}}^{-1} & =\Bigrbr{\op I-\rbr{\mu-\nu}\op A_{\nu}^{0}\rbr{\op A_{\mu}^{0}}^{-1}\bigrbr{\rbr{\op A_{\nu}^{\varepsilon}}^{-1}-\rbr{\op A_{\nu}^{0}}^{-1}}}^{-1}\\
 & \quad\quad\times\op A_{\nu}^{0}\rbr{\op A_{\mu}^{0}}^{-1}\bigrbr{\rbr{\op A_{\nu}^{\varepsilon}}^{-1}-\rbr{\op A_{\nu}^{0}}^{-1}}\op A_{\nu}^{0}\rbr{\op A_{\mu}^{0}}^{-1},
\end{aligned}
\]
we see that $\rbr{\op A_{\mu}^{\varepsilon}}^{-1}$ is bounded on
$L_{2}\rbr{\R^{d}}^{n}$ uniformly in~$\varepsilon\le\varepsilon_{\mu,\nu}\meet\varepsilon_{0}$,
where
\[
\varepsilon_{\mu,\nu}<\frac{\dist\rbr{\mu,\spec\op A^{0}}}{C_{\nu}\abs{\mu-\nu}\bigrbr{\dist\rbr{\mu,\spec\op A^{0}}+\abs{\mu-\nu}}}.
\]
It~follows that we can set~$\set E_{\mu}=(0,\varepsilon_{\mu,\nu}\meet\varepsilon_{0}]$.
\end{rem}

\begin{rem}
We note that the operator~$D\smash[b]{\rbr{\op A_{\mu}^{\varepsilon}}^{-1}}$
converges in the uniform topology if and only if $D_{2}^{*}A\rbr{x,\wc}\hairspace\xi=0$
on $\tilde H^{1}\rbr{\cell}^{n}$ for every $x\in\R^{d}$ and~$\xi\in\C^{d\times n}$,
in which case $N$ is zero and hence so is~$\op K_{\mu}^{\varepsilon}$.
Notice also that the effective coefficients are then obtained by
ordinary averaging over~$\cell$.
\end{rem}
\begin{rem}
\label{rem: Holder coefficients}By~keeping track of $\seminorm A_{C^{0,1}}$
in estimates, we can find that the constants on the right of (\ref{est: Convergence})
and~(\ref{est: Approximation with 1st corrector}) depend linearly
on~$\seminorm A_{C^{0,1}}$, while the constant on the right of
(\ref{est: Approximation with 2d corrector}), quadratically. These
 observations  play a role in proving results similar to Theorems~\ref{thm: Convergence and Approximation with 1st corrector}
and~\ref{thm: Approximation with 2d corrector}   when the coefficients
are  H\"{o}lder continuous, or even continuous, in the slow variable.
The~key idea is to use mollification  to replace $A$ with a 
function~$A_{\delta}$ that is Lipschitz in the first variable.
In~the case of H\"{o}lder continuous coefficients, we are able to
control  both the convergence rate of $A_{\delta}$ to $A$ in a
H\"{o}lder seminorm and the growth rate of~$\seminorm{A_{\delta}}_{C^{0,1}}$
in terms of $\delta$ as~$\delta\to0$. In~the end, this allows
us to obtain the desired operator  estimates. However, if the coefficients
are only continuous, such an approach yields the convergence of the
resolvent, but not the rate. These results have been announced in~\cite{Se:2017-3};
detailed proofs will appear elsewhere.
\end{rem}

\section{\label{sec: Proof of the main results}Proof of the  main results}

Our first task is to obtain an identity involving  $\rbr{\op A_{\mu}^{\varepsilon}}^{-1}$,
$\rbr{\op A_{\mu}^{0}}^{-1}$ and~$\op K_{\mu}^{\varepsilon}$ that
will play a crucial role in the proofs.

Fix~$f\in L_{2}\rbr{\R^{d}}^{n}$ and~$g\in H^{-1}\rbr{\R^{d}}^{n}$.
Let $u_{0}=\rbr{\op A_{\mu}^{0}}^{-1}f$, $U=\op K_{\mu}f$, $U_{\varepsilon}=\op K_{\mu}^{\varepsilon}f$
and~$u_{\varepsilon}^{\dual}=\rbr{\rbr{\op A_{\mu}^{\varepsilon}}^{\dual}}^{-1}g$.
Then we have
\begin{equation}
\begin{aligned}\bigrbr{\rbr{\op A_{\mu}^{\varepsilon}}^{-1}f-\rbr{\op A_{\mu}^{0}}^{-1}f-\varepsilon\op K_{\mu}^{\varepsilon}f,g}_{\R^{d}} & =\rbr{\op A^{0}u_{0},u_{\varepsilon}^{\dual}}_{\R^{d}}-\rbr{\op A^{\varepsilon}\rbr{\op S^{\varepsilon}u_{0}+\varepsilon U_{\varepsilon}},u_{\varepsilon}^{\dual}}_{\R^{d}}\\
 & \quad-\rbr{\op A^{\varepsilon}\rbr{\op I-\op S^{\varepsilon}}u_{0},u_{\varepsilon}^{\dual}}_{\R^{d}}+\varepsilon\mu\rbr{U_{\varepsilon},u_{\varepsilon}^{\dual}}_{\R^{d}}.
\end{aligned}
\label{eq: First step}
\end{equation}
Let us look at the first two terms on the right. By~the~definition
of the effective coefficients,
\[
\rbr{\op A^{0}u_{0},u_{\varepsilon}^{\dual}}_{\R^{d}}=\bigrbr{A\rbr{D_{1}u_{0}+D_{2}U},D_{1}u_{\varepsilon}^{\dual}}_{\R^{d}\times\cell}.
\]
Then Lemma~\ref{lem: =0003C4=001D4BT=001D4B} yields that
\begin{equation}
\rbr{\op A^{0}u_{0},u_{\varepsilon}^{\dual}}_{\R^{d}}=\bigrbr{\tau^{\varepsilon}\op T^{\varepsilon}A\rbr{D_{1}u_{0}+D_{2}U},\op T^{\varepsilon}D_{1}u_{\varepsilon}^{\dual}}_{\R^{d}\times\cell}\label{eq: A=002070}
\end{equation}
(notice here that $u_{\varepsilon}^{\dual}$ does not depend on the
second variable). On~the other hand,
\begin{equation}
\begin{aligned}\rbr{\op A^{\varepsilon}\rbr{\op S^{\varepsilon}u_{0}+\varepsilon U_{\varepsilon}},u_{\varepsilon}^{\dual}}_{\R^{d}} & =\bigrbr{\tau^{\varepsilon}A\op T^{\varepsilon}\rbr{D_{1}u_{0}+D_{2}U},D_{1}u_{\varepsilon}^{\dual}}_{\R^{d}\times\cell}\\
 & \quad+\varepsilon\bigrbr{\tau^{\varepsilon}A\op T^{\varepsilon}D_{1}U,D_{1}u_{\varepsilon}^{\dual}}_{\R^{d}\times\cell}.
\end{aligned}
\label{eq: A=001D4B}
\end{equation}
Commuting $\op T^{\varepsilon}$ past $A$ in the first term on the
right and combining the resulting identity with (\ref{eq: A=002070}),
we conclude that
\begin{equation}
\begin{aligned}\hspace{2em} & \hspace{-2em}\rbr{\op A^{0}u_{0},u_{\varepsilon}^{\dual}}_{\R^{d}}-\rbr{\op A^{\varepsilon}\rbr{\op S^{\varepsilon}u_{0}+\varepsilon U_{\varepsilon}},u_{\varepsilon}^{\dual}}_{\R^{d}}\\
 & =\bigrbr{\tau^{\varepsilon}\op T^{\varepsilon}A\rbr{D_{1}u_{0}+D_{2}U},\rbr{\op T^{\varepsilon}-\op I}D_{1}u_{\varepsilon}^{\dual}}_{\R^{d}\times\cell}\\
 & \quad-\bigrbr{\tau^{\varepsilon}\sbr{A,\op T^{\varepsilon}}\rbr{D_{1}u_{0}+D_{2}U},D_{1}u_{\varepsilon}^{\dual}}_{\R^{d}\times\cell}\\
 & \quad-\varepsilon\bigrbr{\tau^{\varepsilon}A\op T^{\varepsilon}D_{1}U,D_{1}u_{\varepsilon}^{\dual}}_{\R^{d}\times\cell}.
\end{aligned}
\label{eq: A=002070 - A=001D4B, in the beginning}
\end{equation}

We would like to be able to prove that the norm of the operator
corresponding to the left-hand side is of order~$\varepsilon$. It
is clear from the previous discussion that the last two terms on the
right satisfy the desired estimate. The~same would be true for the
first term if we could integrate by parts and transfer $D_{1}$
from $\rbr{\op T^{\varepsilon}-\op I}u_{\varepsilon}^{\dual}$ to~$A\rbr{D_{1}u_{0}+D_{2}U}$.
The following technical result will be useful for this purpose.
\begin{lem}
\label{lem: Differentiation of =0003C4=0003B5T=0003B5F}Let $F\in C^{0,1}\rbr{\bar{\R}^{d};\tilde L_{2}\rbr{\cell}}^{d\times n}$
be such that $D_{2}^{*}F\rbr{x,\wc}=0$ on $\tilde H^{1}\rbr{\cell}^{n}$
for each~$x\in\R^{d}$. Then $D_{1}^{*}\tau^{\varepsilon}\op T^{\varepsilon}F=\tau^{\varepsilon}\op T^{\varepsilon}D_{1}^{*}F$
on $C_{c}^{1}\rbr{\R^{d};C\rbr{\cell}}^{n}$ for any~$\varepsilon>0$.
\end{lem}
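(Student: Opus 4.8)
The~identity is suggested by the chain rule: if $F$ were smooth, then
\[
D_{1}^{*}\bigrbr{\tau^{\varepsilon}\op T^{\varepsilon}F}\rbr{x,z}=\bigrbr{D_{1}^{*}F}\rbr{x+\varepsilon z,\varepsilon^{-1}x}+\varepsilon^{-1}\bigrbr{D_{2}^{*}F}\rbr{x+\varepsilon z,\varepsilon^{-1}x},
\]
and the singular $\varepsilon^{-1}$\nobreakdash-term would drop out because of the hypothesis $D_{2}^{*}F\rbr{x,\wc}=0$, leaving precisely $\tau^{\varepsilon}\op T^{\varepsilon}D_{1}^{*}F$. As $F$ is only square-integrable in the fast variable, the plan is to mollify $F$, carry out this classical computation for the mollification, and then remove the mollification. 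It~is essential to keep the operator $\op T^{\varepsilon}$ throughout: its presence is what makes $\tau^{\varepsilon}\op T^{\varepsilon}F$ and $\tau^{\varepsilon}\op T^{\varepsilon}D_{1}^{*}F$ locally square-integrable at all (cf.\ the proof of Lemma~\ref{lem: =0003C4=001D4BT=001D4B}), and it is what will make the limit passage go through.

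So I~would first let $F^{(\delta)}$ be the regularization of $F$ obtained by mollifying in the slow variable and, periodically, in the fast one. Then $F^{(\delta)}$ is smooth and periodic in the fast variable; furthermore, mollification commutes with $D_{1}^{*}$, so $D_{1}^{*}F^{(\delta)}=\bigrbr{D_{1}^{*}F}^{(\delta)}$, and it preserves the relation $D_{2}^{*}F\rbr{x,\wc}=0$, so $D_{2}^{*}F^{(\delta)}\rbr{x,\wc}=0$ for every $x$. For fixed $z\in\cell$ the map $x\mapsto\tau^{\varepsilon}\op T^{\varepsilon}F^{(\delta)}\rbr{x,z}=F^{(\delta)}\rbr{x+\varepsilon z,\varepsilon^{-1}x}$ is of class $C^{1}$, so the ordinary chain rule together with $D_{2}^{*}F^{(\delta)}=0$ yields $D_{1}^{*}\bigrbr{\tau^{\varepsilon}\op T^{\varepsilon}F^{(\delta)}}=\tau^{\varepsilon}\op T^{\varepsilon}\bigrbr{D_{1}^{*}F^{(\delta)}}$ pointwise. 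Integrating by parts in the slow variable against a test function $\psi$ and then integrating out $z$, I~arrive at
\[
\bigrbr{\tau^{\varepsilon}\op T^{\varepsilon}F^{(\delta)},D_{1}\psi}_{\R^{d}\times\cell}=\bigrbr{\tau^{\varepsilon}\op T^{\varepsilon}D_{1}^{*}F^{(\delta)},\psi}_{\R^{d}\times\cell}
\]
for every $\psi\in C_{c}^{1}\rbr{\R^{d};C\rbr{\cell}}^{n}$.

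It~then remains to let $\delta\to0$. Each side reduces to a statement of the form $\bigrbr{\tau^{\varepsilon}\op T^{\varepsilon}\bigrbr{H^{(\delta)}-H},\Psi}_{\R^{d}\times\cell}\to0$, where $H\in L_{\infty}\rbr{\R^{d};\tilde L_{2}\rbr{\cell}}$ and $\Psi$ is bounded, continuous, and compactly supported in the slow variable: the left-hand side is the case $H=F$, $\Psi=D_{1}\psi$, and the right-hand side the case $H=D_{1}^{*}F$, $\Psi=\psi$ (here one uses that $D_{1}^{*}F\in L_{\infty}\rbr{\R^{d};\tilde L_{2}\rbr{\cell}}^{n}$, since $F$ is Lipschitz in the slow variable, together with $D_{1}^{*}F^{(\delta)}=\bigrbr{D_{1}^{*}F}^{(\delta)}$). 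To~estimate such a term I~would change variables as in the proof of Lemma~\ref{lem: =0003C4=001D4BT=001D4B}: replacing $x$ by $x-\varepsilon z$, and then carrying out the $z$\nobreakdash-integration via the substitution $y=\varepsilon^{-1}x-z$ and the periodicity of $H$ and $H^{(\delta)}$ in the fast variable, one is led to
\[
\bigabs{\bigrbr{\tau^{\varepsilon}\op T^{\varepsilon}\bigrbr{H^{(\delta)}-H},\Psi}_{\R^{d}\times\cell}}\le\norm{\Psi}_{C}\int_{K_{\varepsilon}}\norm{(H^{(\delta)}-H)\rbr{x,\wc}}_{2,\cell}\dd x,
\]
where $K_{\varepsilon}$ is a bounded set (an $\varepsilon$\nobreakdash-neighborhood of the support of $\Psi$ in the slow variable). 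Since $H^{(\delta)}\to H$ in $L_{2}\rbr{K_{\varepsilon}\times\cell}$, the Cauchy\textendash Schwarz inequality shows the right-hand side tends to $0$, and the lemma follows.

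The~main obstacle, I~expect, is precisely this last step. The~issue is that $D_{1}^{*}F$ is merely bounded, \emph{not} continuous, in the slow variable, so one can neither make sense of $\tau^{\varepsilon}\op T^{\varepsilon}D_{1}^{*}F$ by naively restricting $D_{1}^{*}F$ to the null set $\cbr{\rbr{x,\varepsilon^{-1}x}}$, nor pass to the limit by a pointwise argument; it is exactly the change of variables above that circumvents this, since it spreads the evaluation of $H^{(\delta)}-H$ over a full period in the fast variable and, by periodicity, turns the problem into ordinary $L_{2}$\nobreakdash-convergence of mollifications. The~same device (in the form of Lemma~\ref{lem: =0003C4=001D4BT=001D4B}) also guarantees that $\tau^{\varepsilon}\op T^{\varepsilon}D_{1}^{*}F$ is locally square-integrable, so that the asserted identity is one between honest functions.
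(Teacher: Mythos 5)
Your proof is correct, but it takes a genuinely different route from the paper's. The paper regularizes $F$ by truncating its Fourier series in the fast variable: for the partial sums $F_K(x,\cdot)=\sum_{\abs{k}\le K}\hat F_k\rbr x\hairspace e_k$, the hypothesis $D_2^*F\rbr{x,\wc}=0$ reduces to the algebraic relation $\abr{\hat F_k\rbr x,k}=0$, which makes the cross-term in the chain rule vanish explicitly; the passage to the limit $K\to\infty$ is then done via pointwise convergence of partial Fourier sums in $L_2\rbr\cell$ plus the Lebesgue dominated convergence theorem (a weak-$*$ argument in $C_c\rbr{\R^d\times\cell}^*$), after first reducing to $\varepsilon=1$ by rescaling. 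You instead mollify in both variables, note that mollification commutes with $D_1^*$ and preserves the divergence-free condition in the fast variable (a duality argument, rather than the explicit Fourier identity), and pass to the limit via the measure-preserving change of variables underlying Lemma~\ref{lem: =0003C4=001D4BT=001D4B} together with $L_2$-convergence of mollifications on bounded sets. What each buys: the paper's Fourier route makes the vanishing of the singular $\varepsilon^{-1}$-term maximally transparent and needs almost no machinery beyond Fourier series; your route is more uniform in spirit with the rest of the paper (it recycles the isometry device of Lemma~\ref{lem: =0003C4=001D4BT=001D4B}), sidesteps the question of whether the dominated-convergence claim applies to $D_1^*F$ (which is only $L_\infty$, not $C^{0,1}$, in the slow variable), and makes explicit the point — also implicit in the paper — that $\tau^\varepsilon\op T^\varepsilon D_1^*F$ is a bona fide locally square-integrable function precisely because the change of variables smears the diagonal over a full period.
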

\begin{proof}
It suffices to check the assertion for $\varepsilon=1$, because the
general result will then follow from this special case applied to
the function~$\rbr{x,y}\mapsto F\rbr{\varepsilon x,y}$. After
a change of variables, we must show that, for any $\varphi\in C_{c}^{1}\rbr{\R^{d};C\rbr{\cell}}^{n}$,
\begin{equation}
\int_{\R^{d}}\int_{\cell}\abr{F\rbr{x,x+y},D_{1}\varphi\rbr{x,y}}\dd x\dd y=\int_{\R^{d}}\int_{\cell}\abr{D_{1}^{*}F\rbr{x,x+y},\varphi\rbr{x,y}}\dd x\dd y.\label{eq: Differentiation of =0003C4=0003B5T=0003B5F | Change of variables}
\end{equation}
Were $F$  smooth, this would be nothing but the usual integration
by parts formula. But we can find a sequence of divergence free smooth
functions that converges, in a certain sense, to the function~$F$,
which will yield the desired conclusion.

If~$e_{k}\rbr y=e^{2\pi i\abr{y,k}}$, where~$k\in\Z^{d}$,
then we let $F_{K}\rbr{x,\wc}$ denote the  partial sum of the Fourier
series for~$F\rbr{x,\wc}$:
\[
F_{K}\rbr{x,\wc}=\sum_{\abs k\le K}\hat{F}_{k}\rbr x\hairspace e_{k}.
\]
By~hypothesis, $D_{2}^{*}F\rbr{x,\wc}=0$ on~$\tilde H^{1}\rbr{\cell}^{n}$,
so
\[
\abr{\hat{F}_{k}\rbr x,k}=\rbr{2\pi}^{-1}\int_{\cell}\abr{F\rbr{x,y},De_{k}\rbr y}\dd y=0
\]
for~each~$k\in\Z^{d}$. An~integration by parts then gives
\begin{equation}
\int_{\R^{d}}\int_{\cell}\abr{F_{K}\rbr{x,x+y},D_{1}\varphi\rbr{x,y}}\dd x\dd y=\int_{\R^{d}}\int_{\cell}\abr{D_{1}^{*}F_{K}\rbr{x,x+y},\varphi\rbr{x,y}}\dd x\dd y\label{eq: Differentiation of =0003C4=0003B5T=0003B5F | Identity for Fk}
\end{equation}
(notice here that $D\hat{F}_{k}\rbr x$ are exactly the Fourier coefficients
of $D_{1}F\rbr{x,\wc}$).

Our goal now is to pass from (\ref{eq: Differentiation of =0003C4=0003B5T=0003B5F | Identity for Fk})
to~(\ref{eq: Differentiation of =0003C4=0003B5T=0003B5F | Change of variables}).
Let $f$ be a function in $C^{0,1}\rbr{\bar{\R}^{d};\tilde L_{2}\rbr{\cell}}$
and let $f_{K}\rbr{x,\wc}$ be the partial sum of the Fourier series
for~$f\rbr{x,\wc}$. We claim that $f_{K}\to f$ in the weak\nobreakdash-$*$
topology on $C_{c}\rbr{\R^{d}\times\cell}^{*}$ as~$K\to\infty$.
Indeed, given any $\psi\in C_{c}\rbr{\R^{d}\times\cell}$, the sequence
of  functions~$x\mapsto\rbr{f_{K}\rbr{x,\wc},\psi\rbr{x,\wc}}_{\cell}$
converges pointwise to the function~$x\mapsto\rbr{f\rbr{x,\wc},\psi\rbr{x,\wc}}_{\cell}$,
because $f_{K}\rbr{x,\wc}\to f\rbr{x,\wc}$ in~$L_{2}$. In~addition,
all the functions in the sequence are supported in a compact set
and are uniformly bounded, since
\[
\abs{\rbr{f_{K}\rbr{x,\wc},\psi\rbr{x,\wc}}_{\cell}}\le\norm{f\rbr{x,\wc}}_{2,\cell}\norm{\psi\rbr{x,\wc}}_{2,\cell}\le\norm f_{C}\norm{\psi}_{C}.
\]
We see that $\rbr{f_{K},\psi}_{\R^{d}\times\cell}\to\rbr{f,\psi}_{\R^{d}\times\cell}$
by the Lebesgue dominated convergence theorem, and the claim follows.

The proof is completed now by letting $K\to\infty$ in~(\ref{eq: Differentiation of =0003C4=0003B5T=0003B5F | Identity for Fk}).
\end{proof}

By~definition, we have $A\rbr{D_{1}u_{0}+D_{2}U}=A\rbr{I+D_{2}N}Du_{0}$.
Assume for the moment that~$u_{0},u_{\varepsilon}^{\dual}\in C_{c}^{\infty}\rbr{\R^{d}}^{n}$.
We recall that, for each $x\in\R^{d}$ and~$\xi\in\C^{d\times n}$,
$D_{2}^{*}A\rbr{x,\wc}\hairspace\rbr{I+D_{2}N\rbr{x,\wc}}\xi=0$ on
$\tilde H^{1}\rbr{\cell}^{n}$, so Lemma~\ref{lem: Differentiation of =0003C4=0003B5T=0003B5F}
applies to show that
\begin{equation}
\begin{aligned}\hspace{2em} & \hspace{-2em}\bigrbr{\tau^{\varepsilon}\op T^{\varepsilon}A\rbr{D_{1}u_{0}+D_{2}U},\rbr{\op T^{\varepsilon}-\op I}D_{1}u_{\varepsilon}^{\dual}}_{\R^{d}\times\cell}\\
 & =\bigrbr{\tau^{\varepsilon}\op T^{\varepsilon}D_{1}^{*}A\rbr{D_{1}u_{0}+D_{2}U},\rbr{\op T^{\varepsilon}-\op I}u_{\varepsilon}^{\dual}}_{\R^{d}\times\cell}
\end{aligned}
\label{eq: First term, once Lemma applied}
\end{equation}
for~every~$u_{0},u_{\varepsilon}^{\dual}\in C_{c}^{\infty}\rbr{\R^{d}}^{n}$.
Moreover, since the form
\[
\rbr{u_{0},u_{\varepsilon}^{\dual}}\mapsto\bigrbr{\tau^{\varepsilon}\op T^{\varepsilon}A\rbr{D_{1}u_{0}+D_{2}U},\rbr{\op T^{\varepsilon}-\op I}D_{1}u_{\varepsilon}^{\dual}}_{\R^{d}\times\cell}
\]
is  continuous on~$H^{1}\rbr{\R^{d}}^{n}\times H^{1}\rbr{\R^{d}}^{n}$
and since the form
\[
\rbr{u_{0},u_{\varepsilon}^{\dual}}\mapsto\bigrbr{\tau^{\varepsilon}\op T^{\varepsilon}D_{1}^{*}A\rbr{D_{1}u_{0}+D_{2}U},\rbr{\op T^{\varepsilon}-\op I}u_{\varepsilon}^{\dual}}_{\R^{d}\times\cell}
\]
is continuous on~$H^{2}\rbr{\R^{d}}^{n}\times L_{2}\rbr{\R^{d}}^{n}$,
the last equality holds  for any $u_{0}\in H^{2}\rbr{\R^{d}}^{n}$
and~$u_{\varepsilon}^{\dual}\in H^{1}\rbr{\R^{d}}^{n}$.

Now that we have this result, (\ref{eq: A=002070 - A=001D4B, in the beginning})
becomes
\begin{equation}
\begin{aligned}\hspace{2em} & \hspace{-2em}\rbr{\op A^{0}u_{0},u_{\varepsilon}^{\dual}}_{\R^{d}}-\rbr{\op A^{\varepsilon}\rbr{\op S^{\varepsilon}u_{0}+\varepsilon U_{\varepsilon}},u_{\varepsilon}^{\dual}}_{\R^{d}}\\
 & =\bigrbr{\tau^{\varepsilon}\op T^{\varepsilon}D_{1}^{*}A\rbr{D_{1}u_{0}+D_{2}U},\rbr{\op T^{\varepsilon}-\op I}u_{\varepsilon}^{\dual}}_{\R^{d}\times\cell}\\
 & \quad-\bigrbr{\tau^{\varepsilon}\sbr{A,\op T^{\varepsilon}}\rbr{D_{1}u_{0}+D_{2}U},D_{1}u_{\varepsilon}^{\dual}}_{\R^{d}\times\cell}\\
 & \quad-\varepsilon\bigrbr{\tau^{\varepsilon}A\op T^{\varepsilon}D_{1}U,D_{1}u_{\varepsilon}^{\dual}}_{\R^{d}\times\cell}.
\end{aligned}
\label{eq: A=002070 - A=001D4B}
\end{equation}
Putting (\ref{eq: A=002070 - A=001D4B}) into (\ref{eq: First step}),
we finally obtain the desired identity:
\begin{equation}
\begin{aligned}\hspace{2em} & \hspace{-2em}\bigrbr{\rbr{\op A_{\mu}^{\varepsilon}}^{-1}f-\rbr{\op A_{\mu}^{0}}^{-1}f-\varepsilon\op K_{\mu}^{\varepsilon}f,g}_{\R^{d}}\\
 & =\bigrbr{\tau^{\varepsilon}\op T^{\varepsilon}D_{1}^{*}A\rbr{D_{1}u_{0}+D_{2}U},\rbr{\op T^{\varepsilon}-\op I}u_{\varepsilon}^{\dual}}_{\R^{d}\times\cell}\\
 & \quad-\bigrbr{\tau^{\varepsilon}\sbr{A,\op T^{\varepsilon}}\rbr{D_{1}u_{0}+D_{2}U},D_{1}u_{\varepsilon}^{\dual}}_{\R^{d}\times\cell}\\
 & \quad-\varepsilon\bigrbr{\tau^{\varepsilon}A\op T^{\varepsilon}D_{1}U,D_{1}u_{\varepsilon}^{\dual}}_{\R^{d}\times\cell}\\
 & \quad-\rbr{\op A^{\varepsilon}\rbr{\op I-\op S^{\varepsilon}}u_{0},u_{\varepsilon}^{\dual}}_{\R^{d}}+\varepsilon\mu\rbr{U_{\varepsilon},u_{\varepsilon}^{\dual}}_{\R^{d}}.
\end{aligned}
\label{eq: The identity}
\end{equation}

We are now in a position  to prove the theorems.
\begin{proof}[Proof of Theorem~\textup{\ref{thm: Convergence and Approximation with 1st corrector}}]
We estimate each term in~(\ref{eq: The identity}). By~Lemmas~\ref{lem: =0003C4=001D4BT=001D4B}
and~\ref{lem: T=001D4B-I},
\begin{equation}
\begin{aligned}\hspace{2em} & \hspace{-2em}\bigabs{\bigrbr{\tau^{\varepsilon}\op T^{\varepsilon}D_{1}^{*}A\rbr{D_{1}u_{0}+D_{2}U},\rbr{\op T^{\varepsilon}-\op I}u_{\varepsilon}^{\dual}}_{\R^{d}\times\cell}}\\
 & \le\norm{\tau^{\varepsilon}\op T^{\varepsilon}D_{1}^{*}A\rbr{D_{1}u_{0}+D_{2}U}}_{2,\R^{d}\times\cell}\norm{\rbr{\op T^{\varepsilon}-\op I}u_{\varepsilon}^{\dual}}_{2,\R^{d}\times\cell}\\
 & \lesssim\varepsilon\bigrbr{\norm{Du_{0}}_{1,2,\R^{d}}+\norm{D_{1}D_{2}U}_{2,\R^{d}\times\cell}+\norm{D_{2}U}_{2,\R^{d}\times\cell}}\norm{Du_{\varepsilon}^{\dual}}_{2,\R^{d}}.
\end{aligned}
\label{est: Proof Thm1 | Est 1}
\end{equation}
Using Lemma~\ref{lem: =0003C4=001D4BT=001D4B} again, we see that
\begin{equation}
\begin{aligned}\hspace{2em} & \hspace{-2em}\bigabs{\bigrbr{\tau^{\varepsilon}\sbr{A,\op T^{\varepsilon}}\rbr{D_{1}u_{0}+D_{2}U},D_{1}u_{\varepsilon}^{\dual}}_{\R^{d}\times\cell}}\\
 & \le\varepsilon\cellradius\seminorm A_{C^{0,1}}\norm{\tau^{\varepsilon}\op T^{\varepsilon}\rbr{D_{1}u_{0}+D_{2}U}}_{2,\R^{d}\times\cell}\norm{D_{1}u_{\varepsilon}^{\dual}}_{2,\R^{d}\times\cell}\\
 & \lesssim\varepsilon\bigrbr{\norm{Du_{0}}_{2,\R^{d}}+\norm{D_{2}U}_{2,\R^{d}\times\cell}}\norm{Du_{\varepsilon}^{\dual}}_{2,\R^{d}}
\end{aligned}
\label{est: Proof Thm1 | Est 2}
\end{equation}
(recall that $\cellradius=1/2\diam\cell$) and
\begin{equation}
\begin{aligned}\varepsilon\bigabs{\bigrbr{\tau^{\varepsilon}A\op T^{\varepsilon}D_{1}U,D_{1}u_{\varepsilon}^{\dual}}_{\R^{d}\times\cell}} & \le\varepsilon\norm A_{C}\norm{\tau^{\varepsilon}\op T^{\varepsilon}D_{1}U}_{2,\R^{d}\times\cell}\norm{D_{1}u_{\varepsilon}^{\dual}}_{2,\R^{d}\times\cell}\\
 & \lesssim\varepsilon\norm{D_{1}U}_{2,\R^{d}\times\cell}\norm{Du_{\varepsilon}^{\dual}}_{2,\R^{d}}.
\end{aligned}
\label{est: Proof Thm1 | Est 3}
\end{equation}
Next, it follows from the estimate~(\ref{est: A=001D4B is bounded})
and Lemma~\ref{lem: S=001D4B-I} that
\[
\begin{aligned}\abs{\rbr{\op A^{\varepsilon}\rbr{\op I-\op S^{\varepsilon}}u_{0},u_{\varepsilon}^{\dual}}_{\R^{d}}} & \lesssim\norm{\rbr{\op I-\op S^{\varepsilon}}u_{0}}_{1,2,\R^{d}}\norm{u_{\varepsilon}^{\dual}}_{1,2,\R^{d}}\\
 & \lesssim\varepsilon\norm{Du_{0}}_{1,2,\R^{d}}\norm{u_{\varepsilon}^{\dual}}_{1,2,\R^{d}}.
\end{aligned}
\]
Finally, Lemma~\ref{lem: =0003C4=001D4BS=001D4B} yields
\[
\varepsilon\abs{\rbr{U_{\varepsilon},u_{\varepsilon}^{\dual}}_{\R^{d}}}\le\varepsilon\norm{U_{\varepsilon}}_{2,\R^{d}}\norm{u_{\varepsilon}^{\dual}}_{2,\R^{d}}\lesssim\varepsilon\norm U_{2,\R^{d}\times\cell}\norm{u_{\varepsilon}^{\dual}}_{2,\R^{d}}.
\]
In~summary, we have found that
\[
\begin{aligned}\hspace{2em} & \hspace{-2em}\bigabs{\bigrbr{\rbr{\op A_{\mu}^{\varepsilon}}^{-1}f-\rbr{\op A_{\mu}^{0}}^{-1}f-\varepsilon\op K_{\mu}^{\varepsilon}f,g}_{\R^{d}}}\\
 & \lesssim\varepsilon\bigrbr{\norm{Du_{0}}_{1,2,\R^{d}}+\norm{D_{1}D_{2}U}_{2,\R^{d}\times\cell}+\norm U_{1,2,\R^{d}\times\cell}}\norm{u_{\varepsilon}^{\dual}}_{1,2,\R^{d}}.
\end{aligned}
\]

Now suppose that $g\in L_{2}\rbr{\R^{d}}^{n}$. Then  from (\ref{est: Norm of (A=002070-=0003BC)=00207B=0000B9}),
(\ref{est: K is bounded}), (\ref{est: D=002081D=002082K is bounded}),
(\ref{est: Norm of K=001D4B}) and~(\ref{est: Norm of (A=001D4B-=0003BC)=00207B=0000B9})\dualref,
\[
\bigabs{\bigrbr{\rbr{\op A_{\mu}^{\varepsilon}}^{-1}f-\rbr{\op A_{\mu}^{0}}^{-1}f,g}_{\R^{d}}}\lesssim\varepsilon\norm f_{2,\R^{d}}\norm g_{2,\R^{d}},
\]
which proves~(\ref{est: Convergence}). On~the~other hand, setting
$g=D^{*}h$ where $h\in L_{2}\rbr{\R^{d}}^{d\times n}$ and using
(\ref{est: Norm of (A=002070-=0003BC)=00207B=0000B9}), (\ref{est: K is bounded}),
(\ref{est: D=002081D=002082K is bounded}) and~(\ref{est: Norm of (A=001D4B-=0003BC)=00207B=0000B9})\dualref,
we obtain
\[
\bigabs{\bigrbr{\rbr{\op A_{\mu}^{\varepsilon}}^{-1}f-\rbr{\op A_{\mu}^{0}}^{-1}f-\varepsilon\op K_{\mu}^{\varepsilon}f,D^{*}h}_{\R^{d}}}\lesssim\varepsilon\norm f_{2,\R^{d}}\norm h_{2,\R^{d}},
\]
which proves~(\ref{est: Approximation with 1st corrector}).
\end{proof}
\begin{proof}[Proof of Theorem~\textup{\ref{thm: Approximation with 2d corrector}}]
Let $u_{0}^{\dual}=\rbr{\rbr{\op A_{\mu}^{0}}^{\dual}}^{-1}g$,
$U^{\dual}=\op K_{\mu}^{\dual}g$ and~$U_{\varepsilon}^{\dual}=\rbr{\op K_{\mu}^{\varepsilon}}^{\dual}g$.
As~a~first step, we rewrite the corrector~$\op C_{\mu}^{\varepsilon}$
dropping, as we may, terms with operator norm of order~$\varepsilon$.

By~the very definition of~$\op C_{\mu}^{\varepsilon}$,
\[
\rbr{\op C_{\mu}^{\varepsilon}f,g}_{\R^{d}}=\rbr{\op K_{\mu}^{\varepsilon}f,g}_{\R^{d}}-\rbr{\op L_{\mu}f,g}_{\R^{d}}-\rbr{\op M_{\mu}^{\varepsilon}f,g}_{\R^{d}}+\rbr{f,\rbr{\op K_{\mu}^{\varepsilon}}^{\dual}g}_{\R^{d}}-\rbr{f,\op L_{\mu}^{\dual}g}_{\R^{d}}.
\]
We  claim that
\begin{equation}
\begin{aligned}\hspace{2em} & \hspace{-2em}-\varepsilon\rbr{\op L_{\mu}f,g}_{\R^{d}}-\varepsilon\rbr{\op M_{\mu}^{\varepsilon}f,g}_{\R^{d}}+\varepsilon\rbr{f,\rbr{\op K_{\mu}^{\varepsilon}}^{\dual}g}_{\R^{d}}-\varepsilon\rbr{f,\op L_{\mu}^{\dual}g}_{\R^{d}}\\
 & \approx\bigrbr{\tau^{\varepsilon}\op T^{\varepsilon}D_{1}^{*}A\rbr{D_{1}u_{0}+D_{2}U},\rbr{\op T^{\varepsilon}-\op I}\rbr{u_{0}^{\dual}+\varepsilon U_{\varepsilon}^{\dual}}}_{\R^{d}\times\cell}\\
 & \quad-\bigrbr{\tau^{\varepsilon}\sbr{A,\op T^{\varepsilon}}\rbr{D_{1}u_{0}+D_{2}U},D_{1}\rbr{u_{0}^{\dual}+\varepsilon U_{\varepsilon}^{\dual}}}_{\R^{d}\times\cell}\\
 & \quad-\varepsilon\bigrbr{\tau^{\varepsilon}A\op T^{\varepsilon}D_{1}U,D_{1}\rbr{u_{0}^{\dual}+\varepsilon U_{\varepsilon}^{\dual}}}_{\R^{d}\times\cell},
\end{aligned}
\label{eq: Proof Thm2 | C=0003B5 approx.}
\end{equation}
where the symbol~$\approx$ is used to indicate equality up to
terms that will eventually be absorbed into the error.

Indeed, by Lemma~\ref{lem: =0003C4=001D4BT=001D4B} we have
\[
\rbr{\op L_{\mu}f,g}_{\R^{d}}=\bigrbr{\tau^{\varepsilon}\op T^{\varepsilon}D_{1}^{*}A\rbr{D_{1}u_{0}+D_{2}U},\tau^{\varepsilon}\op T^{\varepsilon}U^{\dual}}_{\R^{d}\times\cell}.
\]
Now observe that  $\tau^{\varepsilon}\op T^{\varepsilon}U^{\dual}$
may be replaced by $\tau^{\varepsilon}\op S^{\varepsilon}U^{\dual}$.
This is so because
\[
\begin{aligned}\hspace{2em} & \hspace{-2em}\bigabs{\bigrbr{\tau^{\varepsilon}\op T^{\varepsilon}D_{1}^{*}A\rbr{D_{1}u_{0}+D_{2}U},\tau^{\varepsilon}\op T^{\varepsilon}U^{\dual}-\tau^{\varepsilon}\op S^{\varepsilon}U^{\dual}}_{\R^{d}\times\cell}}\\
 & \le\norm{\tau^{\varepsilon}\op T^{\varepsilon}D_{1}^{*}A\rbr{D_{1}u_{0}+D_{2}U}}_{2,\R^{d}\times\cell}\norm{\tau^{\varepsilon}\op T^{\varepsilon}U^{\dual}-\tau^{\varepsilon}\op S^{\varepsilon}U^{\dual}}_{2,\R^{d}\times\cell},
\end{aligned}
\]
whence, by Lemmas~\ref{lem: =0003C4=001D4BT=001D4B} and~\ref{lem: =0003C4=001D4BT=001D4B-=0003C4=001D4BS=001D4B}
and the estimates~(\ref{est: Norm of (A=002070-=0003BC)=00207B=0000B9}),
(\ref{est: K is bounded}), (\ref{est: D=002081D=002082K is bounded})
and~(\ref{est: K is bounded})\dualref,
\[
\bigabs{\bigrbr{\tau^{\varepsilon}\op T^{\varepsilon}D_{1}^{*}A\rbr{D_{1}u_{0}+D_{2}U},\tau^{\varepsilon}\op T^{\varepsilon}U^{\dual}-\tau^{\varepsilon}\op S^{\varepsilon}U^{\dual}}_{\R^{d}\times\cell}}\lesssim\varepsilon\norm f_{2,\R^{d}}\norm g_{2,\R^{d}}.
\]
Recalling that $U_{\varepsilon}^{\dual}=\tau^{\varepsilon}\op S^{\varepsilon}U^{\dual}$,
we see that
\begin{equation}
\rbr{\op L_{\mu}f,g}_{\R^{d}}\approx\bigrbr{\tau^{\varepsilon}\op T^{\varepsilon}D_{1}^{*}A\rbr{D_{1}u_{0}+D_{2}U},U_{\varepsilon}^{\dual}}_{\R^{d}\times\cell}.\label{eq: Proof Thm2 | L=0003BC}
\end{equation}

We next want to show that
\begin{equation}
\rbr{f,\op L_{\mu}^{\dual}g}_{\R^{d}}\approx\bigrbr{\tau^{\varepsilon}A\op T^{\varepsilon}D_{1}U,D_{1}\rbr{u_{0}^{\dual}+\varepsilon U_{\varepsilon}^{\dual}}}_{\R^{d}\times\cell}.\label{eq: Proof Thm2 | L=0003BC=00207A}
\end{equation}
According to Lemma~\ref{lem: =0003C4=001D4BT=001D4B},
\[
\rbr{f,\op L_{\mu}^{\dual}g}_{\R^{d}}=\bigrbr{\tau^{\varepsilon}\op T^{\varepsilon}AD_{1}U,\tau^{\varepsilon}\op T^{\varepsilon}\rbr{D_{1}u_{0}^{\dual}+D_{2}U^{\dual}}}_{\R^{d}\times\cell}.
\]
We commute $\op T^{\varepsilon}$ through $A$ and use Lemma~\ref{lem: =0003C4=001D4BT=001D4B}
and the estimates~(\ref{est: K is bounded}) and (\ref{est: Norm of (A=002070-=0003BC)=00207B=0000B9})\dualref,~(\ref{est: K is bounded})\dualref\
to get
\[
\rbr{f,\op L_{\mu}^{\dual}g}_{\R^{d}}\approx\bigrbr{\tau^{\varepsilon}A\op T^{\varepsilon}D_{1}U,\tau^{\varepsilon}\op T^{\varepsilon}\rbr{D_{1}u_{0}^{\dual}+D_{2}U^{\dual}}}_{\R^{d}\times\cell}
\]
(notice here that $\tau^{\varepsilon}\sbr{A,\op T^{\varepsilon}}=\tau^{\varepsilon}\rbr{\op I-\op T^{\varepsilon}}A\cdot\tau^{\varepsilon}\op T^{\varepsilon}$).
A~similar argument using Lemma~\ref{lem: T=001D4B-I} shows that
 $\tau^{\varepsilon}\op T^{\varepsilon}D_{1}u_{0}^{\dual}$ (which
is, of course, equal to~$\op T^{\varepsilon}D_{1}u_{0}^{\dual}$)
may be replaced by~$D_{1}u_{0}^{\dual}$. With~a~little extra care
we can  pass from~$\tau^{\varepsilon}\op T^{\varepsilon}D_{2}U^{\dual}$
to $\varepsilon D_{1}U_{\varepsilon}^{\dual}$, as well. Indeed, $\varepsilon D_{1}U_{\varepsilon}^{\dual}=\varepsilon\tau^{\varepsilon}\op S^{\varepsilon}D_{1}U^{\dual}+\tau^{\varepsilon}\op S^{\varepsilon}D_{2}U^{\dual}$,
where $\varepsilon\tau^{\varepsilon}\op S^{\varepsilon}D_{1}U^{\dual}$
creates another error term and $\tau^{\varepsilon}\op S^{\varepsilon}D_{2}U^{\dual}$
is handled exactly as above, by Lemma~\ref{lem: =0003C4=001D4BT=001D4B-=0003C4=001D4BS=001D4B}.
Hence (\ref{eq: Proof Thm2 | L=0003BC=00207A}) is proved.

Repeating these last arguments for $\tau^{\varepsilon}\op T^{\varepsilon}\rbr{D_{1}u_{0}^{\dual}+D_{2}U^{\dual}}$,
we find also that
\begin{equation}
\rbr{\op M_{\mu}^{\varepsilon}f,g}_{\R^{d}}\approx\varepsilon^{-1}\bigrbr{\tau^{\varepsilon}\sbr{A,\op T^{\varepsilon}}\rbr{D_{1}u_{0}+D_{2}U},D_{1}\rbr{u_{0}^{\dual}+\varepsilon U_{\varepsilon}^{\dual}}}_{\R^{d}\times\cell}.\label{eq: Proof Thm2 | M=0003B5}
\end{equation}

Let us turn to the  term involving~$\rbr{\op K_{\mu}^{\varepsilon}}^{\dual}$.
By~the~definition of~$u_{0}$ and~$U_{\varepsilon}^{\dual}$,
\[
\rbr{f,\rbr{\op K_{\mu}^{\varepsilon}}^{\dual}g}_{\R^{d}}=\rbr{\op A^{0}u_{0},U_{\varepsilon}^{\dual}}_{\R^{d}}-\mu\rbr{u_{0},U_{\varepsilon}^{\dual}}_{\R^{d}}.
\]
Applying Lemmas~\ref{lem: S=001D4B-I} and~\ref{lem: S=001D4BK=001D4B}\dualref\
and the estimates~(\ref{est: Norm of (A=002070-=0003BC)=00207B=0000B9})
and~(\ref{est: Norm of K=001D4B})\dualref\ yields
\[
\abs{\rbr{u_{0},U_{\varepsilon}^{\dual}}_{\R^{d}}}\le\abs{\rbr{\rbr{\op S^{\varepsilon}-\op I}u_{0},U_{\varepsilon}^{\dual}}_{\R^{d}}}+\abs{\rbr{u_{0},\op S^{\varepsilon}U_{\varepsilon}^{\dual}}_{\R^{d}}}\lesssim\varepsilon\norm f_{2,\R^{d}}\norm g_{2,\R^{d}},
\]
so
\[
\rbr{f,\rbr{\op K_{\mu}^{\varepsilon}}^{\dual}g}_{\R^{d}}\approx\rbr{\op A^{0}u_{0},U_{\varepsilon}^{\dual}}_{\R^{d}}.
\]
Thus, from Lemma~\ref{lem: =0003C4=001D4BT=001D4B} and the definition
of the effective coefficients, we have
\begin{equation}
\rbr{f,\rbr{\op K_{\mu}^{\varepsilon}}^{\dual}g}_{\R^{d}}\approx\bigrbr{\tau^{\varepsilon}\op T^{\varepsilon}D_{1}^{*}A\rbr{D_{1}u_{0}+D_{2}U},\op T^{\varepsilon}U_{\varepsilon}^{\dual}}_{\R^{d}\times\cell}.\label{eq: Proof Thm2 | (K=0003BC=0003B5)=00207A}
\end{equation}

To~summarize: by (\ref{eq: Proof Thm2 | L=0003BC})\textendash (\ref{eq: Proof Thm2 | (K=0003BC=0003B5)=00207A}),
(\ref{eq: Proof Thm2 | C=0003B5 approx.}) reduces to showing that
\begin{equation}
\bigabs{\bigrbr{\tau^{\varepsilon}\op T^{\varepsilon}D_{1}^{*}A\rbr{D_{1}u_{0}+D_{2}U},\rbr{\op T^{\varepsilon}-\op I}u_{0}^{\dual}}_{\R^{d}\times\cell}}\lesssim\varepsilon^{2}\norm f_{2,\R^{d}}\norm g_{2,\R^{d}}.\label{est: Proof Thm2 | Remaining term in C=0003B5}
\end{equation}
Let us prove~(\ref{est: Proof Thm2 | Remaining term in C=0003B5}).
From~Lemma~\ref{lem: Differentiation of =0003C4=0003B5T=0003B5F},
we know that
\[
\begin{aligned}\hspace{2em} & \hspace{-2em}\bigrbr{\tau^{\varepsilon}\op T^{\varepsilon}D_{1}^{*}A\rbr{D_{1}u_{0}+D_{2}U},\rbr{\op T^{\varepsilon}-\op I}u_{0}^{\dual}}_{\R^{d}\times\cell}\\
 & =\bigrbr{\tau^{\varepsilon}\op T^{\varepsilon}A\rbr{D_{1}u_{0}+D_{2}U},\rbr{\op T^{\varepsilon}-\op I}D_{1}u_{0}^{\dual}}_{\R^{d}\times\cell}
\end{aligned}
\]
(cf.~(\ref{eq: First term, once Lemma applied})). Lemmas~\ref{lem: T=001D4B-I}
and~\ref{lem: =0003C4=001D4BT=001D4B-=0003C4=001D4BS=001D4B} and
the estimates~(\ref{est: Norm of (A=002070-=0003BC)=00207B=0000B9}),
(\ref{est: K is bounded}), (\ref{est: D=002081D=002082K is bounded})
and~(\ref{est: Norm of (A=002070-=0003BC)=00207B=0000B9})\dualref\
enable us to replace $\tau^{\varepsilon}\op T^{\varepsilon}A\rbr{D_{1}u_{0}+D_{2}U}$
with $\tau^{\varepsilon}\op S^{\varepsilon}A\rbr{D_{1}u_{0}+D_{2}U}$.
Reversing the order of integration to switch $\op T^{\varepsilon}$
and~$\op S^{\varepsilon}$ and again using Lemma~\ref{lem: Differentiation of =0003C4=0003B5T=0003B5F},
we get
\[
\begin{aligned}\hspace{2em} & \hspace{-2em}\bigrbr{\tau^{\varepsilon}\op T^{\varepsilon}D_{1}^{*}A\rbr{D_{1}u_{0}+D_{2}U},\rbr{\op T^{\varepsilon}-\op I}u_{0}^{\dual}}_{\R^{d}\times\cell}\\
 & \approx\bigrbr{\tau^{\varepsilon}\op T^{\varepsilon}D_{1}^{*}A\rbr{D_{1}u_{0}+D_{2}U},\rbr{\op S^{\varepsilon}-\op I}u_{0}^{\dual}}_{\R^{d}\times\cell}.
\end{aligned}
\]
It~then follows from Lemmas~\ref{lem: =0003C4=001D4BT=001D4B} and~\ref{lem: S=001D4B-I}
and the estimates~(\ref{est: Norm of (A=002070-=0003BC)=00207B=0000B9}),
(\ref{est: K is bounded}), (\ref{est: D=002081D=002082K is bounded})
and~(\ref{est: Norm of (A=002070-=0003BC)=00207B=0000B9})\dualref\
that
\[
\bigabs{\bigrbr{\tau^{\varepsilon}\op T^{\varepsilon}D_{1}^{*}A\rbr{D_{1}u_{0}+D_{2}U},\rbr{\op S^{\varepsilon}-\op I}u_{0}^{\dual}}_{\R^{d}\times\cell}}\lesssim\varepsilon^{2}\norm f_{2,\R^{d}}\norm g_{2,\R^{d}}.
\]
We have verified (\ref{est: Proof Thm2 | Remaining term in C=0003B5}),
and therefore the claim is established.

Now we subtract (\ref{eq: Proof Thm2 | C=0003B5 approx.}) from
(\ref{eq: The identity}) to obtain
\[
\begin{aligned}\hspace{2em} & \hspace{-2em}\bigrbr{\rbr{\op A_{\mu}^{\varepsilon}}^{-1}f-\rbr{\op A_{\mu}^{0}}^{-1}f-\varepsilon\op C_{\mu}^{\varepsilon}f,g}_{\R^{d}}\\
 & \approx\bigrbr{\tau^{\varepsilon}\op T^{\varepsilon}D_{1}^{*}A\rbr{D_{1}u_{0}+D_{2}U},\rbr{\op T^{\varepsilon}-\op I}\rbr{u_{\varepsilon}^{\dual}-u_{0}^{\dual}-\varepsilon U_{\varepsilon}^{\dual}}}_{\R^{d}\times\cell}\\
 & \quad-\bigrbr{\tau^{\varepsilon}\sbr{A,\op T^{\varepsilon}}\rbr{D_{1}u_{0}+D_{2}U},D_{1}\rbr{u_{\varepsilon}^{\dual}-u_{0}^{\dual}-\varepsilon U_{\varepsilon}^{\dual}}}_{\R^{d}\times\cell}\\
 & \quad-\varepsilon\bigrbr{\tau^{\varepsilon}A\op T^{\varepsilon}D_{1}U,D_{1}\rbr{u_{\varepsilon}^{\dual}-u_{0}^{\dual}-\varepsilon U_{\varepsilon}^{\dual}}}_{\R^{d}\times\cell}\\
 & \quad-\rbr{\op A^{\varepsilon}\rbr{\op I-\op S^{\varepsilon}}u_{0},u_{\varepsilon}^{\dual}}_{\R^{d}}+\varepsilon\mu\rbr{U_{\varepsilon},u_{\varepsilon}^{\dual}}_{\R^{d}}.
\end{aligned}
\]
Using the inequalities~(\ref{est: Proof Thm1 | Est 1}), (\ref{est: Proof Thm1 | Est 2})
and~(\ref{est: Proof Thm1 | Est 3}) with $u_{\varepsilon}^{\dual}-u_{0}^{\dual}-\varepsilon U_{\varepsilon}^{\dual}$
in place of $u_{\varepsilon}^{\dual}$ and then applying the estimates~(\ref{est: Norm of (A=002070-=0003BC)=00207B=0000B9}),
(\ref{est: K is bounded}), (\ref{est: D=002081D=002082K is bounded})
and~(\ref{est: Approximation with 1st corrector})\dualref, we see
that the norms of the operators associated with the first three forms
on the right are of order~$\varepsilon^{2}$. As~for~the~last
two forms, we write
\[
\rbr{\op A^{\varepsilon}\rbr{\op I-\op S^{\varepsilon}}u_{0},u_{\varepsilon}^{\dual}}_{\R^{d}}=\bigrbr{\rbr{\op I-\op S^{\varepsilon}}u_{0},g+\bar{\mu}u_{\varepsilon}^{\dual}}_{\R^{d}}
\]
and
\[
\varepsilon\rbr{U_{\varepsilon},u_{\varepsilon}^{\dual}}_{\R^{d}}=\varepsilon\rbr{\op S^{\varepsilon}U_{\varepsilon},u_{\varepsilon}^{\dual}}_{\R^{d}}+\varepsilon\rbr{U_{\varepsilon},\rbr{\op I-\op S^{\varepsilon}}u_{\varepsilon}^{\dual}}_{\R^{d}}.
\]
Then, by Lemma~\ref{lem: S=001D4B-I} and the estimates~(\ref{est: Norm of (A=002070-=0003BC)=00207B=0000B9})
and~(\ref{est: Norm of (A=001D4B-=0003BC)=00207B=0000B9})\dualref,
\[
\begin{aligned}\abs{\rbr{\op A^{\varepsilon}\rbr{\op I-\op S^{\varepsilon}}u_{0},u_{\varepsilon}^{\dual}}_{\R^{d}}} & \le\norm{\rbr{\op I-\op S^{\varepsilon}}u_{0}}_{2,\R^{d}}\bigrbr{\norm g_{2,\R^{d}}+\abs{\mu}\norm{u_{\varepsilon}^{\dual}}_{2,\R^{d}}}\\
 & \lesssim\varepsilon^{2}\norm f_{2,\R^{d}}\norm g_{2,\R^{d}},
\end{aligned}
\]
while, by Lemmas~\ref{lem: S=001D4B-I} and~\ref{lem: S=001D4BK=001D4B}
and the estimates~(\ref{est: Norm of K=001D4B}) and~(\ref{est: Norm of (A=001D4B-=0003BC)=00207B=0000B9})\dualref,
\[
\begin{aligned}\varepsilon\abs{\rbr{U_{\varepsilon},u_{\varepsilon}^{\dual}}_{\R^{d}}} & \le\varepsilon\norm{\op S^{\varepsilon}U_{\varepsilon}}_{2,\R^{d}}\norm{u_{\varepsilon}^{\dual}}_{2,\R^{d}}+\varepsilon\norm{U_{\varepsilon}}_{2,\R^{d}}\norm{\rbr{\op I-\op S^{\varepsilon}}u_{\varepsilon}^{\dual}}_{2,\R^{d}}\\
 & \lesssim\varepsilon^{2}\norm f_{2,\R^{d}}\norm g_{2,\R^{d}}.
\end{aligned}
\]
The~proof is complete.
\end{proof}

\section*{Acknowledgment}

The~author is grateful to T.~A.~Suslina for helpful discussions.

\bibliographystyle{amsalpha}
\bibliography{bibliography}

\end{document}